\numberwithin{equation}{subsection}
\newcommand{\nc}{\newcommand}
\nc\rnc{\renewcommand}
\theoremstyle{plain}
\newtheorem{theorem}{Theorem}[section]
\newtheorem{lemma}[theorem]{Lemma}
\newtheorem{corollary}[theorem]{Corollary}
\newtheorem{proposition}[theorem]{Proposition}
\newtheorem{conjecture}[theorem]{Conjecture}
\theoremstyle{definition}
\theoremstyle{remark}
\newtheorem{remark}[theorem]{Remark}
\newcommand\Aut{\operatorname{Aut}}
\newcommand\Hom{\operatorname{Hom}}
\newcommand\GL{\operatorname{GL}}
\newcommand\SL{\operatorname{SL}}
\newcommand\id{\operatorname{id}}
\newcommand\im{\operatorname{im}}
\newcommand\coker{\operatorname{coker}}
\newcommand\Z{\mathbb{Z}}
\newcommand\C{\mathbb{C}}
\newcommand\R{\mathbb{R}}
\newcommand\Q{\mathbb{Q}}
\newcommand\h{\mathfrak{h}}
\newcommand\gpS{\mathfrak{S}}
\newcommand\red[1]{{\color{red}#1}}
\renewcommand\red{}
\newcommand\centre[1]{\begin{array}{c} #1 \end{array}}
\newcommand\centre{\input{[}}1]{\centre{\input{#1}}}
\newcommand{\GLnZ}{\GL(n,\Z)}
\nc\FAb{\mathbf{FAb}}
\nc\xto[1]{{\overset{#1}{\longrightarrow}}}
\nc\yto[1]{{\underset{#1}{\longrightarrow}}}
\nc\xyto[2]{{\overset{#1}{\underset{#2}{\longrightarrow}}}}
\nc\ad{{\operatorname{ad}}}
\nc\even{{\operatorname{even}}}
\nc\ev{{\operatorname{ev}}}
\nc\coev{{\operatorname{coev}}}
\nc\odd{{\operatorname{odd}}}
\nc\half{{\frac12}}
\nc\halfof[1]{{\frac{#1}2}}
\nc\projto{\underset{\text{proj}}{\longrightarrow}}
\nc\no[1]{}
\nc\ok{\comm{ok?}}
\nc\ho{{\hat\otimes }}
\nc\plim{\varprojlim}
\nc\np{\newpage}
\nc\bfA{\mathbf{A}} \nc\bbA{\mathbb{A}} \nc\calA{\mathcal{A}}
\nc\bfB{\mathbf{B}} \nc\bbB{\mathbb{B}} \nc\calB{\mathcal{B}}
\nc\bfC{\mathbf{C}} \nc\bbC{\mathbb{C}} \nc\calC{\mathcal{C}}
\nc\bfD{\mathbf{D}} \nc\bbD{\mathbb{D}} \nc\calD{\mathcal{D}}
\nc\bfE{\mathbf{E}} \nc\bbE{\mathbb{E}} \nc\calE{\mathcal{E}}
\nc\bfF{\mathbf{F}} \nc\bbF{\mathbb{F}} \nc\calF{\mathcal{F}}
\nc\bfG{\mathbf{G}} \nc\bbG{\mathbb{G}} \nc\calG{\mathcal{G}}
\nc\bfH{\mathbf{H}} \nc\bbH{\mathbb{H}} \nc\calH{\mathcal{H}}
\nc\bfI{\mathbf{I}} \nc\bbI{\mathbb{I}} \nc\calI{\mathcal{I}}
\nc\bfJ{\mathbf{J}} \nc\bbJ{\mathbb{J}} \nc\calJ{\mathcal{J}}
\nc\bfK{\mathbf{K}} \nc\bbK{\mathbb{K}} \nc\calK{\mathcal{K}}
\nc\bfL{\mathbf{L}} \nc\bbL{\mathbb{L}} \nc\calL{\mathcal{L}}
\nc\bfM{\mathbf{M}} \nc\bbM{\mathbb{M}} \nc\calM{\mathcal{M}}
\nc\bfN{\mathbf{N}} \nc\bbN{\mathbb{N}} \nc\calN{\mathcal{N}}
\nc\bfO{\mathbf{O}} \nc\bbO{\mathbb{O}} \nc\calO{\mathcal{O}}
\nc\bfP{\mathbf{P}} \nc\bbP{\mathbb{P}} \nc\calP{\mathcal{P}}
\nc\bfQ{\mathbf{Q}} \nc\bbQ{\mathbb{Q}} \nc\calQ{\mathcal{Q}}
\nc\bfR{\mathbf{R}} \nc\bbR{\mathbb{R}} \nc\calR{\mathcal{R}}
\nc\bfS{\mathbf{S}} \nc\bbS{\mathbb{S}} \nc\calS{\mathcal{S}}
\nc\bfT{\mathbf{T}} \nc\bbT{\mathbb{T}} \nc\calT{\mathcal{T}}
\nc\bfU{\mathbf{U}} \nc\bbU{\mathbb{U}} \nc\calU{\mathcal{U}}
\nc\bfV{\mathbf{V}} \nc\bbV{\mathbb{V}} \nc\calV{\mathcal{V}}
\nc\bfW{\mathbf{W}} \nc\bbW{\mathbb{W}} \nc\calW{\mathcal{W}}
\nc\bfX{\mathbf{X}} \nc\bbX{\mathbb{X}} \nc\calX{\mathcal{X}}
\nc\bfY{\mathbf{Y}} \nc\bbY{\mathbb{Y}} \nc\calY{\mathcal{Y}}
\nc\bfZ{\mathbf{Z}} \nc\bbZ{\mathbb{Z}} \nc\calZ{\mathcal{Z}}
\nc\bfone {{\mathbf 1}}
\nc\Vect{\mathbf{Vect}}
\nc\Sets{\mathbf{Sets}}
\nc\Mod{\mathbf{Mod}}
\nc\Cat{\mathbf{Cat}}
\nc\ul{\underline}
\nc\simeqto{\overset{\simeq}{\longrightarrow }}
\nc\ct{\overset{\cong}{\longrightarrow }}
\nc\mt{\mapsto}
\nc\hr{\medskip\hrule\medskip}
\nc\trl{\triangleleft}
\nc\trr{\triangleright}
\nc\xysquare[8]{\xymatrix{
    #1 \ar[r]#5 \ar[d]#6 & #2 \ar[d]#7 \\
    #3 \ar[r]#8          & #4
  }
  }
\nc\Ob{\operatorname{Ob}}
\nc\Mor{\operatorname{Mor}}
\nc\al{\alpha}
\nc\be{\beta}
\nc\la{\lambda}
\nc\ot{\otimes}
\nc\ott{\ot\cdots\ot}
\nc\Sp{\operatorname{Sp}}
\nc\SO{\operatorname{SO}}
\nc\HH{\mathrm{HH}}
\nc\ci{\circ}
\nc\sq{\square}
\nc\incl{\mathrm{incl}}
\nc\ol{\overline}
\nc\sqcups{\sqcup\cdots\sqcup}
\nc\congto{\overset{\cong}{\to}}
\nc\hide[1]{}
\nc\blue[1]{{\textcolor[rgb]{0,0,.9}{#1}}}
\nc\bluen[1]{\blue{[[#1]]}}
\nc\bnote{\bluen}
\nc\redn[1]{\red{[[#1]]}}
\nc\comment[1]{\marginpar{\tiny #1}}
\nc\len{\operatorname{len}}
\nc\VIC{\mathrm{VIC}}
\nc\GLnQ{\GL(n,\Q)}
\nc\nKMP{n_{\mathrm{KMP}}}
\nc\nB{n_{\mathrm{B}}}
\nc\Pol{\mathcal{P}ol}
\nc\VICmod{\VIC\text{-}\mathrm{mod}}
\nc\colim{\operatorname{colim}}
\nc\ulla{{\ul\lambda}}
\nc\Sz{\mathsf{Sz}}
\nc\St{\mathsf{St}}
\nc\SF{\mathrm{SF}}
\nc\AutFn{\Aut(F_n)}
\let\copybigwedge\bigwedge
\renewcommand\bigwedge{\copybigwedge\nolimits}
\title[On Borel's stable range of the twisted cohomology of $\GL(n,\Z)$]{On Borel's stable range\\ of the twisted cohomology of $\GL(n,\Z)$}
\author{Kazuo Habiro}
\author{Mai Katada}
\address{Department of Mathematics, Kyoto University, Kyoto 606-8502, Japan}
\email{habiro@math.kyoto-u.ac.jp}
\email{katada.mai.36s@st.kyoto-u.ac.jp}
\date{December 18, 2022}
\keywords{General linear groups, Group cohomology, $\VIC$-modules}
\subjclass[2020]{11F75, 20J06, 22E46}
\begin{document}
\maketitle

\begin{abstract}
Borel's stability and vanishing theorem gives the stable cohomology of $\GL(n,\Z)$ with coefficients in algebraic $\GL(n,\Z)$-representations. 
We compute the improved stable range that Borel remarked about. 
In order to further improve Borel's stable range, we adapt the method of Kupers--Miller--Patzt to any algebraic $\GL(n,\Z)$-representations.
\end{abstract}

\setcounter{tocdepth}{1}
%\tableofcontents

\section{Introduction}
% Borel \cite{Borel1} proved that the rational cohomology of $\GLnZ$ \emph{stabilizes}, which means that the map on the rational cohomology in each degree induced by the inclusion $\GLnZ\hookrightarrow \GL(n+1,\Z)$ is an isomorphism for sufficiently large $n$, and he computed the stable rational cohomology of $\GLnZ$.
% Borel \cite{Borel2} also computed the stable cohomology of $\GLnZ$ with coefficients in algebraic $\GLnZ$-representations.

Borel proved the stability of the rational cohomology of $\GL(n,\Z)$ and computed the stable cohomology \cite{Borel1}.
He also proved the vanishing of the stable cohomology of $\GL(n,\Z)$ with coefficients in non-trivial algebraic $\GL(n,\Z)$-representations \cite{Borel2}.
He gave constants for the stable ranges and remarked about improved stable ranges, but he did not compute these stable ranges explicitly except for a few types of representations.

Li and Sun \cite{Li-Sun} improved Borel's stable ranges and obtained stable ranges that are independent of types of representations.
For coefficients in polynomial $\GLnZ$-representations, Kupers, Miller and Patzt \cite{Kupers-Miller-Patzt}
improved the stable ranges by using arguments on polynomial $\VIC$-modules.

In this paper, we compute the improved stable range that Borel remarked about. We also adapt Kupers, Miller and Patzt's argument to coefficients in algebraic $\GLnZ$-representations indexed by \emph{bipartitions}, i.e., pairs of partitions.
Our results are weaker than Li and Sun's. However, the methods are very different and we think that it is still worth publishing these results.

\subsection{Stable range for the cohomology of $\GL(n,\Z)$}
Borel studied the stable twisted cohomology of $\GL(n,\Z)$.
The Borel stability theorem \cite{Borel1} gives the stable rational cohomology of $\GLnZ$, and the Borel vanishing theorem \cite{Borel2} gives the vanishing of the cohomology of $\GLnZ$ with coefficients in non-trivial algebraic $\GL(n,\Z)$-representations in a stable range
(see Section \ref{alg-glnz-rep} for the definition of algebraic $\GL(n,\Z)$-representations).

Li and Sun \cite{Li-Sun} gave an improved stable range, which is independent of the types of $\GLnZ$-representations.

\begin{theorem}[Borel \cite{Borel1, Borel2}, Li--Sun \cite{Li-Sun}]
\label{Borel-Li-Sun}
(1) For each integer $n\ge 1$, the algebra map $$H^*(\GL(n+1,\Z),\Q)\to H^*(\GL(n,\Z),\Q)$$ induced by the inclusion $\GL(n,\Z)\hookrightarrow \GL(n+1,\Z)$ is an isomorphism in $*\le n-2$.
Moreover, we have an algebra isomorphism
\begin{gather*}
    \varprojlim_n H^*(\GL(n,\Z),\Q)\cong \bigwedge_{\Q} (x_1, x_2,\ldots), \quad \deg x_i=4i+1
\end{gather*}
in degree $*\le n-2$.

(2)
 Let $V$ be an algebraic $\GL(n,\Q)$-representation such that $V^{\GL(n,\Q)}=0$.
 Then we have 
 \begin{gather*}
     H^p(\GL(n,\Z),V) = 0 \quad \text{for $p\le n-2$}.
 \end{gather*}
\end{theorem}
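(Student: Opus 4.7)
The plan is to follow the classical strategy initiated by Borel and refined by Li--Sun, in which $H^*(\GLnZ, V)$ is computed by comparison with relative Lie algebra cohomology on the ambient real group.

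First, I would identify $H^*(\GLnZ, V_\Q)$ with the cohomology of the locally symmetric space $\GLnZ \backslash X_n$, where $X_n = \GL(n,\R)/O(n)$, with coefficients in the flat bundle $\widetilde{V}$ attached to the algebraic representation $V$. Second, I would construct the natural comparison map from the relative Lie algebra cohomology $H^*(\mathfrak{gl}_n(\R), O(n); V_\R)$ into this cohomology. Borel's theorem asserts that this map is an isomorphism in a range depending on $n$; the input is a spectral analysis of automorphic forms showing that nontrivial contributions from the continuous and cuspidal spectra do not affect low-degree cohomology.

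Third, I would compute the target of the comparison map. For $V = \Q$ trivial, the relative Lie algebra cohomology is the $O(n)$-invariant part of the Chevalley--Eilenberg complex of $\mathfrak{gl}_n(\R)/\mathfrak{o}(n)$; its stable limit is, by a classical calculation of Cartan and Borel on the compact dual symmetric space, the exterior algebra $\bigwedge_\Q(x_1, x_2, \ldots)$ with $\deg x_i = 4i+1$, yielding part (1). For $V$ with $V^{\GL(n,\Q)} = 0$, a standard application of Kostant's theorem (or direct invariant theory) shows that $H^*(\mathfrak{gl}_n(\R), O(n); V_\R) = 0$ in the stable range, yielding part (2).

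The main obstacle, and the content of Li--Sun's contribution, is to sharpen the stable range to $* \le n-2$ uniformly in $V$. Borel's original bound depends on the highest weight of $V$, so making it independent of $V$ requires controlling the contributions of nontrivial automorphic representations via refined estimates on their infinitesimal characters, combined with an inductive rank argument. Once this uniform range is established, parts (1) and (2) follow from the structural computations above.
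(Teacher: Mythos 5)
The paper does not supply a proof of this theorem: it is quoted as an established external result from Borel \cite{Borel1,Borel2} and Li--Sun \cite{Li-Sun}, and Remark \ref{rem1} makes explicit that the authors' own contributions (Corollary \ref{corollaryhomologyofGL} and Theorem \ref{KMPtheorem}) are \emph{weaker} than this statement and were derived by different means. So there is no internal argument to compare your sketch against; you are being asked to reconstruct a proof that lives entirely in the cited literature.

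As a sketch of Borel's strategy your outline is broadly on the right track, but two points should be tightened before it could be called a proof. First, the vanishing of $H^*(\mathfrak{gl}_n(\R),O(n);V_\R)$ for $V$ irreducible nontrivial is not what one would naturally call ``Kostant's theorem'' (which concerns $\mathfrak{n}$-cohomology); the standard mechanism is Wigner's lemma, i.e.\ the infinitesimal character of a nontrivial irreducible finite-dimensional representation differs from that of the trivial representation, so the constant-function summand of the $(\mathfrak{g},K)$-cohomology of automorphic forms contributes nothing. Second, and more seriously, Wigner's lemma only kills the contribution of the \emph{trivial} automorphic representation; the hard part of Borel's theorem is bounding the degrees in which non-constant automorphic forms (with matching infinitesimal character) can contribute, via spectral estimates on the Casimir. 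This is precisely what produces Borel's original constant $C(\Gamma(\Q),V)$ depending on the highest weight of $V$, and Li--Sun's contribution is a substantially more delicate analysis (using, among other things, properties of cohomological automorphic representations and an induction on parabolic ranks) to make the bound $n-2$ uniform in $V$. The phrase ``refined estimates on their infinitesimal characters, combined with an inductive rank argument'' points in the right direction but does not contain the argument; as written, the sketch assumes the crux rather than proving it.
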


% Borel's stable range for coefficients in irreducible polynomial $\GL(n,\Z)$-representations $V_{\lambda}$ corresponding to partitions $\lambda$ was improved by Kupers, Miller and Patzt \cite{Kupers-Miller-Patzt} as follows.
% For a partition $\la=(\la_1,\ldots,\la_l)$, let $|\la|=\la_1+\cdots+\la_l$.
Kupers, Miller and Patzt \cite{Kupers-Miller-Patzt} obtained the stable ranges for coefficients in irreducible polynomial $\GL(n,\Z)$-representations corresponding to partitions by using arguments of $\VIC$-modules. 
In particular, their stable range for the rational cohomology is wider by $1$
than that of Li and Sun. 

\begin{theorem}[Borel \cite{Borel1, Borel2}, Kupers--Miller--Patzt \cite{Kupers-Miller-Patzt}]
We have
\begin{gather*}
 H^*(\GL(n,\Z), \Q)\cong \bigwedge_{\Q} (x_1, x_2,\ldots), \quad \deg x_i=4i+1
\end{gather*}
in degree $*\le n-1$.
% If $\lambda\neq 0$, then we have
%   $$H^p(\GL(n,\Z),V_{\lambda})=0$$
%   for $n\ge p+1+|\lambda|$.
\end{theorem}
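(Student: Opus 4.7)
The plan is to invoke the paper's adaptation of the Kupers--Miller--Patzt technique in the simplest possible case, namely for the trivial coefficient system, which corresponds to the constant $\VIC$-module $\ul\Q$ of polynomial degree $0$. Borel has already identified the inverse limit of the rational cohomology algebra with the exterior algebra $\bigwedge_\Q(x_1,x_2,\ldots)$ on generators of degrees $4i+1$, so the only thing to prove is that the stabilization map $H^*(\GL(n+1,\Z),\Q)\to H^*(\GL(n,\Z),\Q)$ is an isomorphism in the wider range $*\le n-1$, rather than in Borel--Li--Sun's range $*\le n-2$ coming from Theorem~\ref{Borel-Li-Sun}.

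Concretely, the proof would proceed as follows. First, I would realize the trivial representation $\Q$ as the value at $n$ of the constant $\VIC$-module $\ul\Q$ and verify that this $\VIC$-module is polynomial of degree $0$ in the Kupers--Miller--Patzt sense. Second, I would apply their cohomological stability theorem for polynomial $\VIC$-modules, carefully tracking the dependence of the stable range on the polynomial degree. For polynomial degree $0$, the resulting bound specializes to $*\le n-1$, one degree better than the Borel--Li--Sun range for generic algebraic coefficients. Finally, combining this improved stability with Borel's computation of the stable algebra recalled in Theorem~\ref{Borel-Li-Sun}(1), I would identify $H^*(\GL(n,\Z),\Q)$ with the corresponding truncation of $\bigwedge_\Q(x_1,x_2,\ldots)$.

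The main obstacle lies in pinning down the precise dependence of the stable range on the polynomial degree within the Kupers--Miller--Patzt framework, and in particular in verifying that the degree-$0$ specialization strictly improves Borel's $n-2$ bound by one. This requires scrutinising the semisimplicial resolution (or equivalent Koszul-type complex) that KMP use to reduce twisted stability to an untwisted connectivity estimate for an associated complex of partial bases, and checking that in the constant-coefficient case the connectivity assumption gives one extra degree of overlap. Once this bookkeeping is settled, the multiplicative structure and the identification with the exterior algebra follow immediately from Borel's theorem, so no further analysis of cup products is required.
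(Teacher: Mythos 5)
Your proposal is correct and follows essentially the same route the paper implicitly relies on: the paper does not reprove this theorem but cites Kupers--Miller--Patzt, and later records exactly the trivial-coefficient input you describe in Theorem~\ref{KMPprop} (the constant $\VIC$-module has polynomial degree~$0$, so Theorem~\ref{KMPlemma} gives $H_p(\GL(n,\Z),\Q)\cong H_p(\GL(n+1,\Z),\Q)$ for $p<n$), and combining this with Borel's identification of the stable limit in Theorem~\ref{Borel-Li-Sun}(1) yields $H^*(\GL(n,\Z),\Q)\cong\bigwedge_\Q(x_1,x_2,\ldots)$ in $*\le n-1$. The only unnecessary step in your plan is the proposal to re-examine KMP's semisimplicial/partial-bases connectivity argument: once one accepts their stability theorem for polynomial $\VIC$-modules (quoted here as Theorem~\ref{KMPlemma}), the degree-$0$ case already gives the range $p<n$ directly, with no further bookkeeping required.
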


% Recently, Krannich and Randal-Williams \cite{Krannich-RW} estimated Borel's original stable range for coefficients in irreducible algebraic $\GL(n,\Z)$-representations.

We explicitly compute the improved stable range that Borel remarked about, and furthermore, we improve the stable range by adapting the method of Kupers, Miller and Patzt \cite{Kupers-Miller-Patzt}, although these stable ranges depend on the types of representations and are weaker than the stable range given by Li and Sun.
For a bipartition $\ul\lambda=(\lambda,\lambda')$, let $V_{\ul\la}$ denote the (irreducible or zero) algebraic $\GL(n,\Z)$-representation corresponding to $\ul\la$ (see Section \ref{alg-glnz-rep}).

\begin{theorem}[Corollary \ref{corollaryhomologyofGL} and Theorem \ref{KMPtheorem}, weaker than Theorem \ref{Borel-Li-Sun}]
Let $\ul\lambda\neq (0,0)$ be a bipartition. Then we have
  $$H^p(\GL(n,\Z),V_{\ul\lambda})=0$$
  for $n\ge n_0(\ul\lambda,p)$.
\end{theorem}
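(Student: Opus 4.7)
The theorem packages two results proved by quite different methods, so I would attack it in two parallel parts and take $n_0(\ul\lambda,p)$ to be the better of the two bounds, $\min(\nB(\ul\lambda,p),\nKMP(\ul\lambda,p))$.

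For the first part (Corollary \ref{corollaryhomologyofGL}), I would make explicit the stable range that Borel himself only remarked about. Borel's vanishing theorem proceeds by comparing the group cohomology $H^*(\GLnZ, V_{\ul\lambda})$ with the relative Lie algebra cohomology $H^*(\mathfrak{gl}_n,\SO(n);V_{\ul\lambda})$ of the symmetric space $\GL(n,\R)/\SO(n)$; this comparison map is an isomorphism in a range governed by a stability slope and a term depending on the weight of $V_{\ul\lambda}$, and the target vanishes when $V_{\ul\lambda}$ is non-trivial. I would trace through Borel's inequalities keeping track of every constant for a general bipartition $\ul\lambda=(\lambda,\lambda')$, packaging the result as an explicit function $\nB(\ul\lambda,p)$ such that $H^p(\GLnZ,V_{\ul\lambda})=0$ for $n\ge\nB(\ul\lambda,p)$.

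For the second part (Theorem \ref{KMPtheorem}), I would adapt the $\VIC$-module argument of Kupers--Miller--Patzt from single-partition polynomial representations to bipartition representations. The key observation is that a $\VIC$-morphism consists of an injection together with a chosen complement, and this splitting datum is exactly what makes the sequence $n\mapsto V_{\ul\lambda}$ into a $\VIC$-module: the complement gives a natural home for the dual factors contributing $V_{\lambda'}^*$. I would verify that this $\VIC$-module is polynomial in the sense of Kupers--Miller--Patzt with polynomial degree at most $|\lambda|+|\lambda'|$, and then plug this together with the vanishing range from the first part into their central stability machinery (and the accompanying shift/truncation spectral sequence), obtaining the second explicit bound $\nKMP(\ul\lambda,p)$.

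The main obstacle is the polynomiality step for bipartitions. For $\ul\lambda=(\lambda,0)$ this is already in Kupers--Miller--Patzt, but for a genuine bipartition the contravariant dependence of the dual representation on the underlying lattice must be re-expressed through the $\VIC$-morphism's splitting data, and one must confirm that the polynomial degree is controlled by $|\lambda|+|\lambda'|$ rather than something worse coming from doubling or from the interaction between $\lambda$ and $\lambda'$. Checking this at the level of the cross-effect filtration defining polynomial $\VIC$-modules is the real technical content; once it is in place, the rest of the KMP argument runs essentially unchanged and yields the improved bound.
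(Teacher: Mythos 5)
Your two-pronged strategy is exactly the paper's: the bound $\nB(\ulla,p)$ comes from computing Borel's improved constant $C'(\SL(n,\Q),V_{\ulla})$ via the Weyl-group combinatorics of $\rho+\mu_{\ulla}$, and the bound $\nKMP(\ulla,p)$ comes from realizing $V_{\ulla}$ as a polynomial $\VIC$-module, feeding the vanishing in large rank (from the first part) into Kupers--Miller--Patzt's relative-homology stability theorem. You also correctly isolate the key observation that the chosen complement in a $\VIC$-morphism is what turns $M\mapsto M_{\Q}^{*}$ into a covariant functor, making bipartition Schur functors genuine $\VIC$-modules.

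The one place your account needs sharpening is where the ``doubling'' actually enters. The polynomial degree of $V_{\ulla}$ is $|\ulla|=|\lambda|+|\lambda'|$ in all cases; nothing worse appears there. What degrades for a genuine bipartition is the rank threshold $m$: the traceless filtration $F^0\subset\cdots\subset F^{\min(p,q)}$ of $V^{p,q}$ only yields the short exact sequence onto $(V^{\langle p-l,q-l\rangle})^{\oplus\cdots}$ in ranks $>p+q-1$, and the inductive bookkeeping then only proves $V^{\langle p,q\rangle}$ (and hence $V_{\ulla}$) polynomial of degree $p+q$ \emph{in ranks} $>2(p+q)$ when both $p,q>0$. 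Since Kupers--Miller--Patzt's stability statement gives vanishing for $p<n-\max(r,m)$, it is $m=2|\ulla|$, not the degree, that produces $\nKMP(\ulla,p)=p+1+2|\ulla|$ in the two-sided case while the one-sided case keeps $p+1+|\ulla|$. Moreover, to make the three-term exact sequence lemma compatible with these rank restrictions the paper has to \emph{strengthen} the definition of polynomiality (requiring $\ker V$ to vanish in ranks $>\max(m-r-1,-1)$ rather than $>m$); if you run the argument with the original Kupers--Miller--Patzt or Patzt definitions, the inductive step in the filtration argument does not close. This is the genuine technical content hiding behind your phrase ``checking this at the level of the cross-effect filtration,'' and tracking only the polynomial degree would leave you unable to see why the bound is worse for true bipartitions.
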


Here the constant $n_0(\ul\lambda,p)$ is defined as follows.
For a partition $\lambda$, let $|\lambda|$ and $l(\lambda)$ denote the size and length of $\lambda$, respectively.
Let $\ul\lambda=(\la,\la')$ be a bipartition and $p$ a non-negative integer.
Let $|\ulla|=|\la|+|\la'|$ and $\deg \ulla=|\la|-|\la'|$, and set
$$n_0(\ul\lambda,p)=\min\{\nKMP(\ul\lambda,p), \nB(\ul\lambda,p)\},$$
where
$$
\nKMP(\ul\lambda,p)=
\begin{cases}
p+1+|\ul\lambda| & (\text{if } \la=0\text{ or }\la'=0)\\
p+1+2|\ul\lambda| & (\text{otherwise})
\end{cases}
$$
and 
$$\nB(\ul\lambda,p)=\max(2p+2, 2|\deg\ul\lambda|+1, 2l(\lambda), 2l(\lambda')).$$

\begin{remark}
Let us compare the value of $\nKMP(\ul\lambda,p)$ and $\nB(\ul\la,p)$.
%, where $\ul\la=(\la,\la')$.
For a fixed $\ul\la$, we have $\nKMP(\ul\la,p)<\nB(\ul\la,p)$ for all but finitely many $p$.
If $p$ is relatively small with respect to $\ul\la$ then we sometimes have $\nB(\ul\la,p)<\nKMP(\ul\la,p)$.
For example, we have $\nKMP((4,4),1)=18$ and $\nB((4,4),1)=4$.
Note that Theorem \ref{Borel-Li-Sun} gives a better bound $3$ in this case.
\end{remark}

\begin{remark}
\label{rem1}
This paper stemmed from the first version of \cite{Habiro-Katada} (with a different title), which included the two approaches to improve Borel's stable ranges described in this paper.
After the first version of \cite{Habiro-Katada} appeared on the arXiv, 
Oscar Randal-Williams informed us of the result of Li and Sun about the improvement of the Borel theorem \cite{Li-Sun}.
Since our results for Borel's stable range 
turned out to be weaker than Li and Sun's result, we have decided to remove these results from \cite{Habiro-Katada} and to rely on Li and Sun's result there.
\end{remark}

\subsection{Organization of the paper}

The rest of this paper is organized as follows.
In Section \ref{alg-glnz-rep}, we recall some facts about representation theory of $\GL(n,\Q)$.
In Section \ref{homologyofGL}, we recall Borel's stability and vanishing theorem for $\GL(n,\Z)$ and compute the improved stable range that Borel remarked about for irreducible algebraic representations.
In Section \ref{secImprovedVanishing}, we improve the stable range by using the arguments of Kupers, Miller and Patzt \cite{Kupers-Miller-Patzt}.

\subsection*{Acknowledgements} The authors thank Geoffrey Powell and Oscar Randal-Williams for helpful comments.
K.H. was supported in part by JSPS KAKENHI Grant Number 18H01119 and 22K03311.
M.K. was supported in part by JSPS KAKENHI Grant Number JP22J14812.

\section{Algebraic $\GL(n,\Z)$-representations}
\label{alg-glnz-rep}

Let $n\ge1$ be an integer.
A \emph{polynomial $\GL(n,\Q)$-representation} is a finite-dimensional $\Q[\GL(n,\Q)]$-module $V$ such that after choosing a basis for $V$, the $(\dim V)^2$ coordinate functions are polynomial in the $n^2$ variables.
A $\GL(n,\Q)$-representation is called \emph{algebraic} if the coordinate functions are rational functions.
See \cite{Fulton-Harris} for some facts from representation theory.

As is well known, irreducible polynomial $\GL(n,\Q)$-representations are classified by partitions with at most $n$ parts.
A \emph{partition} $\lambda=(\lambda_1,\lambda_2,\dots,\lambda_l)$ is a weakly decreasing sequence of non-negative integers.
The \emph{length} $l(\lambda)$ of $\lambda$ is defined by $l(\lambda)=\max(\{0\}\cup\{i\mid \lambda_{i}> 0\})$
and the \emph{size} $|\lambda|$ of $\lambda$ is defined by $|\lambda|=\lambda_1+\cdots+\lambda_{l(\lambda)}$.

We denote by $H=H(n)=\Q^n$ the standard representation of $\GL(n,\Q)$. 
In the following, we usually omit $(n)$.
For a partition $\lambda$, 
the Specht module $S^\lambda$ for $\lambda$ is an irreducible representation of $\gpS_{|\lambda|}$ defined by using the Young symmetrizer associated to $\lambda$.
Define a $\GL(n,\Q)$-representation $$V_\lambda=V_\lambda(n)= H^{\otimes |\lambda|}\otimes_{\Q[\gpS_{|\lambda|}]}S^\lambda.$$
If $l(\lambda)\le n$, then $V_\lambda$ is an irreducible polynomial $\GL(n,\Q)$-representation.
Otherwise, 
we have $V_{\lambda}=0$.

Let $p,q\ge0$ be integers.
We set $H^{p,q}=H^{\otimes p}\otimes (H^*)^{\otimes q}$.
For a pair $(i,j)\in \{1,\dots,p\}\times \{1,\dots,q\}$, we define the
\emph{contraction map} 
\begin{gather}
\label{cij}
c_{i,j}:H^{p,q}\rightarrow H^{p-1,q-1}
\end{gather}
by
\begin{gather*}
\begin{split}
&c_{i,j}((v_1\otimes\cdots\otimes v_{p}) \otimes(f_1\otimes\cdots\otimes f_{q}))\\
&\quad=  \langle v_i,f_j\rangle
    (v_1\otimes\cdots\widehat{v_{i}}\cdots\otimes v_{p})\otimes (f_1\otimes\cdots\widehat{f_{j}}\cdots\otimes f_{q})
\end{split}
\end{gather*}
 for $v_1,\ldots, v_p\in H$ and $f_1,\ldots, f_q\in H^*$,
where the dual pairing $\langle -,-\rangle:H\otimes H^{*}\rightarrow \Q$ is defined by $\langle v,f\rangle=f(v)$.

The \emph{traceless part} $H^{\langle p,q\rangle}$ of $H^{p,q}$ is defined by
$$
  H^{\langle p,q\rangle}=\bigcap_{(i,j)\in \{1,\dots,p\}\times \{1,\dots,q\}} \ker c_{i,j}\subset H^{p,q},
$$
which is a $\GL(n,\Q)$-subrepresentation of $H^{p,q}$.

A \emph{bipartition} is a pair $\ul{\lambda}=(\lambda,\lambda')$ of two partitions $\lambda$ and $\lambda'$.
The \emph{length} $l(\ul{\lambda})$ of the bipartition $\ul{\lambda}$ is defined by $l(\ul\lambda)=l(\lambda)+l(\lambda')$.
The \emph{degree} of $\ul\lambda$ is defined by
$\deg\ul\lambda=|\lambda|-|\lambda'|\in\Z$, and the \emph{size} of $\ul\lambda$ by $|\ul\lambda|=|\lambda|+|\lambda'|$.
We define the \emph{dual} of $\ul\lambda$ by $\ul\lambda^*=(\lambda',\lambda)$.

We associate to each bipartition $\ul\lambda=(\lambda,\lambda')$ a $\GL(n,\Z)$-representation
\begin{gather}
\label{defVlambda}
V_{\ul\lambda}=V_{\ul\lambda}(n)=H^{\langle p,q\rangle}\otimes_{\Q[\gpS_p\times \gpS_q]}(S^{\lambda}\otimes S^{\lambda'}),
\end{gather}
where $p=|\lambda|$ and $q=|\lambda'|$.
If $l(\ul\lambda)\le n$, then $V_{\ul\lambda}$ is an irreducible algebraic $\GLnQ$-representation. Otherwise, we have $V_{\ul\lambda}=0$.
It is well known that irreducible algebraic $\GL(n,\Q)$-representations are classified by bipartitions $\ul\la$ with $l(\ul\la)\le n$.

The traceless part $H^{\langle p,q\rangle}$ of $H^{p,q}$ admits the following direct-sum decomposition
as a $\Q[\GL(n,\Q)\times (\gpS_p\times \gpS_q)]$-module
\begin{gather}\label{kercontraction}
    H^{\langle p,q\rangle}=\bigoplus_{\substack{\ul\lambda=(\lambda,\lambda'): \text{bipartition with} \\ l(\ul\lambda)\le n,\;|\lambda|=p,\;|\lambda'|=q}} V_{\ul\lambda}\otimes (S^{\lambda}\otimes S^{\lambda'}).
\end{gather}
(See \cite[Theorem 1.1]{Koike}.)

Note that we have $\GL(n,\Q)$-isomorphisms
$\det \cong \bigwedge^n V \cong V_{(1^n)}$, where
$\det$ denote the determinant representation, and $(1^n)=(1,\dots,1)$ consists of $n$ copies of $1$.
For any bipartition $\ul\lambda=(\la,\la')$ with $l(\ul\la)\le n$, we have an isomorphism
$$V_{\ul\lambda}\cong V_{\mu}\otimes {\det}^k,$$
for some partition $\mu$ with at most $n$ parts and an integer $k$ such that
$$
(\lambda_1,\ldots,\lambda_{l(\lambda)},0,\ldots,0,-\lambda'_{l(\lambda')},\ldots,-\lambda'_1)=(\mu_1+k,\ldots,\mu_n+k).
$$

By an \emph{algebraic $\GL(n,\Z)$-representation}, we mean the restriction of an algebraic $\GL(n,\Q)$-representation to $\GLnZ$.
Note that ${\det}^2$ is trivial as a $\GL(n,\Z)$-representation.
It follows that any irreducible algebraic $\GL(n,\Z)$-representation is obtained from an irreducible polynomial $\GL(n,\Q)$-representation by restriction to $\GL(n,\Z)$.

\section{Borel's improved stable range}\label{homologyofGL}

In \cite{Borel1,Borel2}, Borel computed the cohomology $H^p(\Gamma,V)$ of an arithmetic group $\Gamma$ with coefficients in an algebraic $\Gamma$-representation $V$ in a stable range $$p\le N(\Gamma,V)=\min(M(\Gamma(\R),V),C(\Gamma(\Q),V)),$$ where $M(\Gamma(\R),V)$ and $C(\Gamma(\Q),V)$ are constants depending only on $\Gamma$ and $V$.
For $\Gamma=\SL(n,\Z)$, 
we have
$M(\SL(n,\R),V)\ge n-2$.
Borel did not compute the constant $C(\SL(n,\Q),V)$ explicitly except for a few types of representations.
Recently, Krannich and Randal-Williams \cite{Krannich-RW} gave an estimation of $C(\SL(n,\Q),V)$.
% (See also \cite{Ebert-Reinhold}.)

Borel remarked that one can replace the constant $C(\Gamma(\Q),V)$ by an improved constant $C'(\Gamma(\Q),V)\ge C(\Gamma(\Q),V)$ \cite[Remark 3.8]{Borel2}.
In this section, we give an estimation of Borel's improved constant for $\Gamma=\SL(n,\Z)$.
The constant $C'(\SL(n,\Q),V)$ depends not only on $n$ but also on the type of the representation $V$,
unlike the cases of $\Sp(2n,\Z)$ and $\SO(n,n;\Z)$ which were determined by Tshishiku \cite{Tshishiku}.

\subsection{Borel's stable range for the cohomology of $\SL(n,\Z)$}

Here we recall Borel's result.
Let $$N'(\SL(n,\Z),V)=\min(M(\SL(n,\R),V),C'(\SL(n,\Q),V)),$$ where the constant $C'$ is defined later.

\begin{theorem}[Borel \cite{Borel1, Borel2}]\label{Borelbipartition}
(1) For each integer $n\ge 1$, the algebra map $$H^*(\SL(n+1,\Z),\Q)\to H^*(\SL(n,\Z),\Q)$$ induced by the inclusion $\SL(n,\Z)\hookrightarrow \SL(n+1,\Z)$ is an isomorphism in $*\le N'(\SL(n,\Z),\Q)$.
Moreover, we have an algebra isomorphism
\begin{gather*}
    \varprojlim_n H^*(\SL(n,\Z),\Q)\cong \bigwedge_{\Q} (x_1, x_2,\ldots), \quad \deg x_i=4i+1
\end{gather*}
in degree $*\le N'(\SL(n,\Z),\Q)$.

(2)
 Let $V$ be an algebraic $\SL(n,\Q)$-representation such that $V^{\SL(n,\Q)}=0$.
 Then we have 
 \begin{gather*}
     H^p(\SL(n,\Z),V) = 0 \quad \text{for $p\le N'(\SL(n,\Z),V)$}.
 \end{gather*}
\end{theorem}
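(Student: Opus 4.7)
The plan is to reduce the statement directly to Borel's arguments in \cite{Borel1, Borel2}, with the only modification being the substitution of the improved constant $C'$ for $C$ as noted in Remark 3.8 of \cite{Borel2}. Setting $G = \SL(n,\R)$, $K = \SO(n)$, and $X = G/K$, one identifies $H^*(\Gamma, V) \cong H^*(\Gamma\backslash X, \widetilde{V})$, where $\widetilde V$ is the local system on $\Gamma\backslash X$ associated to $V$. Borel's strategy represents cohomology classes by $V$-valued differential forms on $\Gamma\backslash G$ with controlled growth: the constant $M(G,V)$ bounds the degrees in which $(\g,K)$-cohomology classes can contribute, while $C(\Gamma(\Q),V)$ (respectively, the improved $C'(\Gamma(\Q),V)$) controls the degrees below which the non-invariant part of the de Rham complex can be regularized away.

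First, I would invoke Borel's regularization theorem, which identifies the stable part of $H^*(\Gamma, V)$ in degrees $\le N'(\SL(n,\Z), V)$ with the relative Lie algebra cohomology $H^*(\g, K; V)$. For trivial coefficients $V = \Q$, a classical computation gives $H^*(\g, K; \Q)$ as the exterior algebra on primitive generators $x_i$ of degree $4i+1$, yielding part (1). For $V$ with $V^G = 0$, the relative Lie algebra cohomology vanishes by the standard Casimir-eigenvalue argument (since the Casimir acts non-trivially on every irreducible summand of such a $V$), yielding part (2).

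Second, the improvement from $C$ to $C'$ from Borel's Remark 3.8 in \cite{Borel2} is a sharpening of the regularization estimate and requires no change in the structural argument; one simply substitutes $C'$ for $C$ throughout. The main obstacle --- and the genuine new content of the subsequent parts of this section --- is not the theorem statement itself, which is entirely Borel's, but the explicit estimation of $C'(\SL(n,\Q), V)$ in terms of the bipartition indexing $V$, since Borel computed $C'$ explicitly only for a handful of special representation types. That quantitative analysis, rather than the present theorem, is where the real work lies.
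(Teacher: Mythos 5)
Your proposal is correct and takes the same approach as the paper: the theorem is simply Borel's stability and vanishing theorem with the constant $C(\Gamma(\Q),V)$ replaced by the improved constant $C'(\Gamma(\Q),V)$ sanctioned by Borel's Remark~3.8 in \cite{Borel2}, and the paper offers no independent proof --- the section opens with ``Here we recall Borel's result.'' Your sketch of the underlying argument (identifying stable cohomology with $H^*(\mathfrak{g},K;V)$ via regularization of $V$-valued forms of controlled growth, computing the exterior algebra on generators in degrees $4i+1$ for the trivial coefficient case, and invoking the Casimir-eigenvalue vanishing of $(\mathfrak{g},K)$-cohomology when $V^G=0$) accurately captures Borel's strategy, and your closing observation is exactly the paper's point: the real content lies in the subsequent explicit estimation of $C'(\SL(n,\Q),V_{\underline{\lambda}})$, not in this citation.
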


% \begin{remark}
% Borel's original result is about the real cohomology $H^*(\SLnZ,\R)$.
% We stably have $H^*(\SLnZ,\R)=\bigwedge_\R(b_1,b_2,\dots)$, where $b_k\in H^{4k+1}(\SLnZ,\R)$
% is the Borel class.
% The element $x_k\in H^{4k+1}(\SLnZ,\Q)$ is a scalar multiple $c_kb_k$, where $c_k\in\R\setminus\{0\}$. The choice of $c_k$ does not matter in what follows. 
% \end{remark}

\subsection{Preliminaries from representation theory}
Before defining Borel's constant, we recall necessary facts from representation theory. See \cite{Fulton-Harris} for details.

Let $n\ge 2$ be an integer. 
Let $\h\subset sl_{n}(\C)$ denote the Cartan subalgebra
$$\h=\{a_1 H_1+\cdots+a_{n}H_{n}\mid a_1+\cdots+a_{n}=0\},$$
where $H_i=E_{i,i}$ is the diagonal matrix.
We write the dual vector space $\h^*$ as
$$\h^*=\C\{L_1,\ldots,L_{n}\}/\C(L_1+\cdots +L_{n}),$$
where $L_i$ is the linear map from the space of diagonal matrices to $\C$ satisfying $L_i(H_j)=\delta_{i,j}$.
The set of \emph{roots} of $sl_{n}(\C)$ is $\{L_i-L_j\mid i\neq j\}$, that of \emph{positive roots} is $\{L_i-L_j\mid i<j\}$ and that of \emph{simple roots} is $\{\alpha_i=L_i-L_{i+1}\mid 1\le i\le n-1\}$.

An element $u=u_1 L_1+\cdots+u_{n} L_{n}$ with $\sum u_i=0$ will be denoted by $[u_1,\ldots, u_{n}]$.
For an element $\phi\in \h^*$, we write $\phi>0$ if
$\phi=\sum_ic_i\alpha_i$ with $c_i>0$ for all $i$.
Note that $\phi=[\phi_1,\ldots,\phi_{n}]\in \h^*$ satisfies $\phi>0$ if and only if $\phi_1+\cdots+\phi_i >0$ for any $i=1,\ldots, n-1$.

The \emph{Weyl group} $W$ of $sl_{n}(\C)$ is the symmetric group $\gpS_{n}=\langle s_1,\ldots, s_{n-1}\rangle$. The generator $s_i$ permutes $L_i$ and $L_{i+1}$ and fixes the other $L_k$.
The \emph{length} $l(\sigma)$ of an element $\sigma\in W$ is the minimum length of
the words in the $s_i$ representing $\sigma$.
Set $W^q=\{\sigma\in W\mid l(\sigma)=q\}$, which consists of elements that send exactly $q$ positive roots to negative roots.
We have $W=\coprod_{q=0}^{l(w_0)} W^q$, where $l(w_0)=\half n(n-1)$ is the length of the longest element $w_0$ of $W=\gpS_n$.

\subsection{Improved constant $C'(\SL(n,\Q),V_{\ul\lambda})$}
Here we define Borel's improved constants $C'$.

For a bipartition $\ul\lambda$ with $l(\ul\lambda)\le n$, let
\begin{gather}
\label{mui}
\mu_{\ul\lambda}=(\mu_1,\ldots,\mu_{n})=(\lambda_1,\ldots, \lambda_{l(\lambda)}, 0,\ldots,0,-\lambda'_{l(\lambda')}, \ldots, -\lambda'_1)
\end{gather}
 be the highest weight of $V_{\ul\lambda}$.
Let $\rho\in \h^*$ be half the sum of positive roots. Then we have $$\rho=[\frac{n-1}{2},\frac{n-3}{2},\frac{n-5}{2},\ldots, -\frac{n-1}{2}]$$
and
$$\rho+\mu_{\ul\lambda}=[\frac{n-1}{2}-\alpha+\mu_1,\frac{n-3}{2}-\alpha+\mu_2,\ldots, -\frac{n-1}{2}-\alpha+\mu_{n}],$$
where $\alpha=\frac{1}{n}\deg\ul\lambda$.
Define 
\begin{gather*}
\begin{split}
C'(\SL(n,\Q),V_{\ul\lambda})
&=\max\{q\in\{0,\dots,l(w_0)\}\mid \sigma(\rho+\mu_{\ul\lambda})>0 \text{ for all }\sigma\in W^{q}\}\ge0.
\end{split}
\end{gather*}
Then we can easily check that 
\begin{equation}\label{C'sym}
    C'(\SL(n,\Q),V_{\ul\lambda})=C'(\SL(n,\Q),V_{\ul\lambda^*}).
\end{equation}

For an algebraic $\SL(n,\Q)$-representation $V$, we set
$$C'(\SL(n,\Q),V)=\min_{\ul\lambda}C'(\SL(n,\Q),V_{\ul\lambda}),$$ where $\ul\lambda$ runs through all bipartitions such that $V_{\ul\lambda}$ is isomorphic to a direct summand of $V$.

% \begin{remark}
% In \cite[Theorem 7.3]{Krannich-RW}, Krannich and Randal-Williams proved that
% $C(\SL(n,\Q),V_{\ul\lambda})$ tends to $\infty$ as $n$ tends to $\infty$.
% Their estimate $C(\SL(n,\Q),V_{\ul\lambda})\ge\frac18n^2-\max(p,q)-1$ (where $p=|\lambda|,q=|\lambda'|$) in the proof seems incorrect. Their argument would actually give a weaker estimate $C(\SL(n,\Q),V_{\ul\lambda})\ge \frac{n-3}{2}-(p+q)$.
% \end{remark}

\subsection{Estimation of $C'(\SL(n,\Q),V_{\ul\lambda})$}

For each bipartition $\ul\lambda$, we define an integer $\nB(\ul\lambda)\ge 0$ by
$$\nB(\ul\lambda):=\max(2|\deg\ul\lambda|+1, 2l(\lambda), 2l(\lambda')).$$

\begin{theorem}\label{slstablerange}
 Let $n\ge 2$.
  Let $\ul\lambda=(\lambda,\lambda')$ be a bipartition.
  Then for every
  $n\ge l(\ul\lambda)$,
  we have 
  \begin{gather}\label{Borelrangeinequality}
      C'(\SL(n,\Q), V_{\ul\lambda})\le\lfloor n/2\rfloor -1.
  \end{gather} 
  The identity holds if $n\ge \nB(\ul\lambda)$.
\end{theorem}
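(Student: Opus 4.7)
The plan is to translate the Weyl-group condition defining $C'$ into a combinatorial condition on subsets of $\{1,\dots,n\}$. Writing $v=\rho+\mu_{\ul\lambda}$ so that $v_j=\frac{n+1-2j}{2}-\alpha+\mu_j$, the sequence $v$ is strictly decreasing (since $\mu$ is weakly decreasing and $\rho$ drops by $1$ at each step) with sum $0$. For $\sigma\in\gpS_n$, setting $S_i:=\sigma^{-1}(\{1,\dots,i\})$, the condition $\sigma(v)>0$ is equivalent to $\phi(S_i):=\sum_{s\in S_i}v_s>0$ for every $i\in\{1,\dots,n-1\}$. Counting inversions gives $\ell(\sigma)\ge\ell(S):=\sum_{k=1}^{|S|}(s_k-k)$ with equality realized by the Grassmannian permutation associated to $S$. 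Hence
\[
C'(\SL(n,\Q),V_{\ul\lambda})=\min\{\ell(S):0<|S|<n,\ \phi(S)\le0\}-1,
\]
and the computation $\sum_{s\in S}\rho_s=|S|(n-|S|)/2-\ell(S)$ gives the working identity
\[
\phi(S)=\tfrac{|S|(n-|S|)}{2}-|S|\alpha-\ell(S)+\sum_{s\in S}\mu_s.
\]

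For the upper bound (valid whenever $n\ge l(\ul\lambda)$), set $m=\lfloor n/2\rfloor$. The strict monotonicity of $v$ with vanishing sum shows that at least one of the alternatives $v_{m+1}\le0$ or $v_{\lceil n/2\rceil}>0$ must hold (otherwise $0<v_{m+1}\le v_{\lceil n/2\rceil}\le0$). In the first case I take $S=\{m+1\}$, giving $\ell(S)=m$ and $\phi(S)=v_{m+1}\le0$. In the second case I take $S=\{1,\dots,n\}\setminus\{\lceil n/2\rceil\}$, giving $\ell(S)=n-\lceil n/2\rceil=m$ and $\phi(S)=-v_{\lceil n/2\rceil}<0$. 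Either way the minimum above is at most $m$, so $C'\le m-1$.

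For the equality statement, assume $n\ge\nB(\ul\lambda)$, so $l(\lambda),l(\lambda')\le m$ and $|\alpha|<1/2$, and show that every $S$ with $0<|S|<n$ and $\ell(S)\le m-1$ satisfies $\phi(S)>0$. Using the symmetry \eqref{C'sym} one may assume $\alpha\ge0$. Writing $\delta_k=s_k-k$, the constraint $\sum\delta_k\le m-1$ combined with the monotonicity $\delta_1\le\dots\le\delta_{|S|}$ forces $s_k\le k+m-1$ for every $k$, so $S$ is only mildly shifted from $\{1,\dots,|S|\}$. The argument then runs by cases on how $S$ meets the three regions of $\mu$: the positive support $\{1,\dots,l(\lambda)\}$, the zero middle $\{l(\lambda)+1,\dots,n-l(\lambda')\}$, and the negative tail $\{n-l(\lambda')+1,\dots,n\}$. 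When $S\subseteq\{1,\dots,n-l(\lambda')\}$ one has $\sum_{s\in S}\mu_s\ge0$, and the formula combined with $|S|(n-|S|)/2\ge(n-1)/2$ and $|\alpha|<1/2$ yields $\phi(S)>0$ directly; the extreme subcase $|S|=1$ already uses $v_{s_1}\ge(n+1-2m)/2-\alpha+0>0$. When $S$ meets the negative tail, any top index $s\in S$ with $s>n-l(\lambda')$ forces a large value of $\delta_{|S|}$, so the budget $\ell(S)\le m-1$ severely restricts the tail contribution of $S$; the residual loss $\sum_{s\in S}\mu_s\ge-|\lambda'|$ is then absorbed by the leading term $|S|(n-|S|)/2-|S|\alpha$, and the extreme subcase $|S|=n-1$ reduces to $\phi(S)=j-\tfrac{n+1}{2}+\alpha-\mu_j>0$ for $j\ge n-m+1$ because $\mu_j\le0$ in this range.

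The main technical obstacle is the intermediate range of $|S|$ in the tail case: the trade-off between the length budget $\ell(S)\le m-1$ and the amount of negative $\mu$ admissible in $S$ must be quantified precisely via the monotone structure of $(\delta_k)$. The hypotheses $l(\lambda),l(\lambda')\le m$ and $|\alpha|<1/2$ supplied by $n\ge\nB(\ul\lambda)$ are exactly what makes this balance work, which explains why the identity can break down for smaller $n$.
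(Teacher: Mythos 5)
Your reformulation in terms of subsets is a genuine clarification. Writing $v=\rho+\mu_{\ul\lambda}$, $\phi(S)=\sum_{s\in S}v_s$, $\ell(S)=\sum_k(s_k-k)$, the translation of $\sigma(v)>0$ into positivity of $\phi$ on all prefix sets $S_i=\sigma^{-1}(\{1,\dots,i\})$, together with the closed form
$$\phi(S)=\tfrac{|S|(n-|S|)}{2}-|S|\alpha-\ell(S)+\sum_{s\in S}\mu_s,$$
packages exactly the same data the paper manipulates through inversion numbers, but more transparently. The upper bound then reduces to exhibiting one bad $S$ with $\ell(S)=\lfloor n/2\rfloor$, and your two choices $\{m+1\}$ (when $v_{m+1}\le0$) and $\{1,\dots,n\}\setminus\{\lceil n/2\rceil\}$ (when $v_{\lceil n/2\rceil}>0$) do this correctly; this is the same content as the paper's $\sigma_\pm$. (Both you and the paper implicitly use that the set of lengths $q$ for which every $\sigma\in W^q$ satisfies $\sigma(v)>0$ is an initial segment of $\{0,\dots,\binom n2\}$; this is true and quick --- if $\sigma$ is bad with $\ell(\sigma)<\binom n2$, pick $j$ with $\ell(\sigma s_j)=\ell(\sigma)+1$; then the bad prefix $S_i$ of $\sigma$ becomes $s_j(S_i)$ for $\sigma s_j$, with $\phi(s_j(S_i))\le\phi(S_i)$ since $\sigma(j)<\sigma(j+1)$ excludes the one transposition that could raise $\phi$ --- but it deserves a sentence.)

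The equality half is where the proposal has a real gap, which you flag yourself. Concretely: (i) In your first case $S\subseteq\{1,\dots,n-l(\lambda')\}$, the bounds $\sum_{s\in S}\mu_s\ge0$, $|S|(n-|S|)/2\ge(n-1)/2$, $|\alpha|<1/2$, $\ell(S)\le m-1$ only give $\phi(S)>\tfrac{n-1}{2}-\tfrac{|S|}{2}-(m-1)$, which is positive for $|S|=1$ but fails already for moderate $|S|$; the paper instead keeps the quadratic and writes, for $1\le k\le n-2$, $\tfrac{k(n-k)}{2}-\tfrac k2-(m-1)\ge\tfrac{(k-1)(n-k-2)}{2}\ge0$, isolating $k=n-1$ for a separate argument that uses $\sum_{i\ne j}\mu_i=n\alpha-\mu_j$ together with $j\ge n-m+1>l(\lambda)$, hence $\mu_j\le0$. (ii) More importantly, the tail case (when $S$ meets $\{n-l(\lambda')+1,\dots,n\}$) is the crux and you explicitly leave the quantitative trade-off unproved. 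The paper's resolution is the step you are missing: setting $J=\min\{j\le m: n+1-j\in S\}$, the constraint $\ell(S)\le m-1$ together with $l(\lambda),l(\lambda')\le m$ yields
$$\sum_{s\in S}\mu_s\ \ge\ (\mu_1+\dots+\mu_{m+1-J})+(\mu_{n+1-J}+\dots+\mu_{n+1-m})\ \ge\ \tfrac{m+1-J}{m}\bigl(|\lambda|-|\lambda'|\bigr)\ \ge\ 0,$$
the middle inequality coming from monotonicity of $\mu$ (a length-$(m+1-J)$ prefix of $\lambda$ captures at least its proportional share of $|\lambda|$, and the corresponding suffix of $-\lambda'$ loses at most its proportional share of $|\lambda'|$). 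Your phrase "absorbed by the leading term" cannot replace this, because --- as (i) already shows --- the leading term $\tfrac{|S|(n-|S|)}{2}-|S|\alpha-\ell(S)$ alone is of order $O(n)$ while the potential deficit in $\sum_{s\in S}\mu_s$ is of order $|\lambda'|$, which is unbounded. So the reformulation and upper bound are sound and essentially equivalent to the paper's argument, but the equality for $n\ge\nB(\ul\lambda)$ still needs the proportional-share inequality above to be established.
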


\begin{proof}
 We first prove \eqref{Borelrangeinequality} for $n\ge l(\ul\lambda)$.
 If $n$ is odd and $(\rho+\mu_{\ul\lambda})_{\frac{n+1}{2}}=-\alpha+\mu_{\frac{n+1}{2}}\ge 0$, then for $\sigma_-=s_{n-1} \cdots s_{\frac{n+1}{2}}\in W$,
 the coefficient of $L_n$ in $\sigma_-(\rho+\mu_{\ul\lambda})$ is 
 $(\rho+\mu_{\ul\lambda})_{\frac{n+1}{2}}\ge 0$.
 If $n$ is odd and $(\rho+\mu_{\ul\lambda})_{\frac{n+1}{2}}< 0$, then for $\sigma_+=s_1 \cdots s_{\frac{n-1}{2}}\in W$,
 the coefficient of $L_1$ in $\sigma_+(\rho+\mu_{\ul\lambda})$ is 
 $(\rho+\mu_{\ul\lambda})_{\frac{n+1}{2}}< 0$.
 If $n$ is even, then we have either $(\rho+\mu_{\ul\lambda})_{\frac{n}{2}}=1/2-\alpha+\mu_{\frac{n}{2}} \ge 0$ or 
 $(\rho+\mu_{\ul\lambda})_{\frac{n}{2}+1}=-1/2-\alpha+\mu_{\frac{n}{2}+1}\le 0$.
 If the former holds, then for $\sigma_-=s_{n-1} \cdots s_{\frac{n}{2}}\in W$,
 the coefficient of $L_n$ in $\sigma_-(\rho+\mu_{\ul\lambda})$ is 
 $(\rho+\mu_{\ul\lambda})_{\frac{n}{2}}\ge 0$.
 If the latter holds, then for $\sigma_+=s_1 \cdots s_{\frac{n}{2}}\in W$,
 the coefficient of $L_1$ in $\sigma_+(\rho+\mu_{\ul\lambda})$ is 
 $(\rho+\mu_{\ul\lambda})_{\frac{n}{2}+1}\le 0$.
 Therefore, in each case, we have $\sigma_{\pm}(\rho+\mu_{\ul\lambda})\not> 0$, which implies \eqref{Borelrangeinequality}.

By \eqref{C'sym}, we have only to consider the case where $\alpha\ge 0$, that is, when $|\lambda|\ge |\lambda'|$.
 Suppose that we have $n\ge \nB(\ul\lambda).$ Thus, we have $0\le \alpha< 1/2$.
 We first prove $C'(\SL(n,\Q), V_{\ul\lambda})=\lfloor n/2\rfloor -1$ for $2\le n\le 4$.
 For $n=2, 3$, this is obvious since we have $\lfloor n/2\rfloor -1=0$.
 For $n=4$, since we have $l(\lambda),l(\lambda')\le2$, it follows that 
 $$\rho+\mu_{\ul\lambda}
 =[3/2+\lambda_1-\alpha, 1/2+\lambda_2-\alpha, -1/2-\lambda'_2-\alpha, -3/2-\lambda'_1-\alpha].$$
 Since $0\le \alpha<1/2$, the first two coefficients are positive and the others are negative.
 For $\sigma=s_1, s_3\in W^1$, it is easily checked that $\sigma(\rho+\mu_{\ul\lambda})>0$. For $\sigma=s_2$, we also have $\sigma(\rho+\mu_{\ul\lambda})>0$ since we have $\lambda_1\ge \lambda'_2$ and thus
 $$(3/2+\lambda_1-\alpha)+(-1/2-\lambda'_2-\alpha)=1-2\alpha+\lambda_1-\lambda'_2\ge 1-2\alpha >0.$$
 Therefore, we have $C'(\SL(n,\Q), V_{\ul\lambda})=\lfloor n/2\rfloor -1$ for $n=4$.
 
 In what follows, we will prove that for $n\ge 5$, $\sigma(\rho+\mu_{\ul\lambda})>0$ for any $\sigma\in W$ of length $\lfloor n/2\rfloor -1$.
 Let $k=1,\ldots,n-1$.
 Considering the inversion number of $\sigma$, we have
 $$
   (\sigma^{-1}(1)-1)+(\sigma^{-1}(2)-2)+\cdots+(\sigma^{-1}(k)-k)\le \lfloor n/2\rfloor-1.
 $$
 Therefore, the sum of the first $k$ coefficients of $\sigma(\rho+\mu_{\ul\lambda})$ is
 \begin{gather*}
  \begin{split}
    &\left(\frac{n-1}{2}-(\sigma^{-1}(1)-1)-\alpha+\mu_{\sigma^{-1}(1)}\right)+\cdots+\left(\frac{n-1}{2}-(\sigma^{-1}(k)-1)-\alpha+\mu_{\sigma^{-1}(k)}\right)\\
    &=\sum_{i=1}^k\left(\frac{n-1}{2}-(i-1)\right)  -\sum_{i=1}^k(\sigma^{-1}(i)-i) +\sum_{i=1}^k(\mu_{\sigma^{-1}(i)}-\alpha)\\
    &\ge\frac{(n-k)k}{2}-(\lfloor n/2\rfloor-1) +\sum_{i=1}^k (\mu_{\sigma^{-1}(i)}-\alpha).
  \end{split}
 \end{gather*}
 Let $T(k)$ denote the right hand side of this inequality.
 It suffices to show that $T(k)>0$ for each $k=1,\ldots, n-1$.
 For $k=n-1$, we have 
 \begin{gather*}
 \begin{split}
T(n-1)&\ge 1/2+\sum_{i=1}^{n-1} \mu_{\sigma^{-1}(i)}-(n-1)\alpha
 =1/2+(|\lambda|-|\lambda'|-\mu_{\sigma^{-1}(n)})-(n-1)\alpha\\
 &=1/2+(n\alpha-\mu_{\sigma^{-1}(n)})-(n-1)\alpha= 1/2+\alpha-\mu_{\sigma^{-1}(n)}\ge 1/2>0.
 \end{split}
 \end{gather*}
 For $1\le k\le n-2$, we have 
  \begin{gather*}
 \begin{split}
  T(k)&=\frac{(n-k)k}{2}-(\lfloor n/2\rfloor-1)-k\alpha +\sum_{i=1}^k \mu_{\sigma^{-1}(i)}\\
  &> \frac{(n-k)k-n+2-k}{2}+\sum_{i=1}^k \mu_{\sigma^{-1}(i)}
  \ge \sum_{i=1}^k \mu_{\sigma^{-1}(i)}.
 \end{split}
 \end{gather*}
 Therefore, it suffices to show that $\sum_{i=1}^k \mu_{\sigma^{-1}(i)}\ge 0.$
 Let $\mathscr{J}=\{j\in \{1,\ldots, \lfloor n/2\rfloor\}\mid \sigma(n+1-j)\le k\}$.
 If $\mathscr{J}=\emptyset$, then  $\sum_{i=1}^k \mu_{\sigma^{-1}(i)}\ge 0$ follows directly from the definition of $\mathscr{J}$. 
Otherwise, let $J=\min \mathscr{J}$.
 Since the length of $\sigma$ is $\lfloor n/2\rfloor-1$, by the hypothesis that $l(\lambda)\le n/2$ and $l(\lambda')\le n/2$, we have
 \begin{gather*}
   \sum_{i=1}^k\mu_{\sigma^{-1}(i)}\ge
   (\mu_1+\cdots+\mu_{\lfloor n/2\rfloor+1-J}) +(\mu_{n+1-J}+\cdots+\mu_{n+1-\lfloor n/2\rfloor}).
 \end{gather*}
 Let $a=\mu_1+\cdots+\mu_{\lfloor n/2\rfloor+1-J}$.
 Then we have $a \ge \frac{\lfloor n/2\rfloor+1-J}{\lfloor n/2\rfloor} |\lambda|$
 since we have 
 $$|\lambda|=a+(\mu_{\lfloor n/2\rfloor+2-J}+\cdots+\mu_{\lfloor n/2\rfloor})
  \le a+ \frac{J-1}{\lfloor n/2\rfloor+1-J} a 
  =\frac{\lfloor n/2\rfloor}{\lfloor n/2\rfloor+1-J}\; a.$$
Let $b=\mu_{n+1-J}+\cdots+\mu_{n+1-\lfloor n/2\rfloor}$. In a similar way, we have $b \ge - \frac{\lfloor n/2\rfloor+1-J}{\lfloor n/2\rfloor} |\lambda'|$.
Therefore, we have
\begin{gather*}
   \sum_{i=1}^k\mu_{\sigma^{-1}(i)}\ge a+b \ge  \frac{\lfloor n/2\rfloor+1-J}{\lfloor n/2\rfloor} (|\lambda|-|\lambda'|)=  \frac{\lfloor n/2\rfloor+1-J}{\lfloor n/2\rfloor} n\alpha\ge 0.
 \end{gather*}
 This completes the proof.
\end{proof}

Note that Theorem \ref{slstablerange} does not give any information for the value of $C'(\SL(n,\Q),V_{\ul\la})$ if $n<\nB(\ul\lambda)$. Some computation suggests the following conjecture, which would completely determine $C'(\SL(n,\Q),V_{\ul\la})$.

\begin{conjecture}
Let $\ul\la$ be a bipartition and let $n\ge l(\ul\la)$ be an integer. 
For $i=1,\dots,n$, set $a(i) = \frac{n+1}2 - i -\alpha+\mu_i$, where $\mu_i$ is given in \eqref{mui}.
Then
we have
\begin{gather*}
    C'(\SL(n,\Q),V_{\ul\la})=\min\{i\in\{1,\dots,n\}\mid 
a(i)\le0\text{ or }a(n+1-i)\ge0\}-2.
\end{gather*}
\end{conjecture}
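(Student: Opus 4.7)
The plan is to translate $C'(\SL(n,\Q),V_{\ul\la})$ into a minimum-length combinatorial problem and study its extremal bad subsets. The $i$-th coordinate $a(i)=\frac{n+1}{2}-i-\alpha+\mu_i$ of $\rho+\mu_{\ul\la}$ is strictly decreasing with $a(i)-a(i+1)\ge 1$, and the $j$-th coordinate of $\sigma(\rho+\mu_{\ul\la})$ is $a(\sigma^{-1}(j))$, so $\sigma(\rho+\mu_{\ul\la})>0$ is equivalent to $\sum_{i\in S}a(i)>0$ for every $k\in\{1,\dots,n-1\}$ with $S=\sigma^{-1}(\{1,\dots,k\})$. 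The minimum length of $\sigma$ realizing a given $k$-subset $S$ is $\mathrm{len}(S):=\sum_{i\in S}i-\binom{k+1}{2}$ (the length of the minimal $W/(S_k\times S_{n-k})$-coset representative), yielding
\[
C'(\SL(n,\Q),V_{\ul\la})+1=\min\bigl\{\mathrm{len}(S):1\le|S|\le n-1,\ \sum_{i\in S}a(i)\le 0\bigr\}.
\]

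For the upper bound $C'\le m-2$, I construct bad subsets of length $m-1$: if $a(m)\le 0$, take $S=\{m\}$; if $a(n+1-m)\ge 0$, take $S=[n]\setminus\{n+1-m\}$, for which $\sum_{i\in S}a(i)=-a(n+1-m)\le 0$ by $\sum_i a(i)=0$. Using the duality $C'(V_{\ul\la})=C'(V_{\ul\la^*})$ of~\eqref{C'sym}, the lower bound reduces to the case $m=i_-$. Writing $S=\{i_1<\cdots<i_k\}$ with displacements $d_j:=i_j-j$ (weakly increasing with $\sum d_j=L:=\mathrm{len}(S)$), the telescoping $a(j)-a(i_j)=d_j+(\mu_j-\mu_{i_j})$ yields
\[
\sum_{i\in S}a(i)=F(k)-L-E(S),\qquad E(S):=\sum_{j=1}^k(\mu_j-\mu_{i_j})\ge 0,\qquad F(k):=\sum_{i=1}^k a(i)>0,
\]
so badness becomes $L+E(S)\ge F(k)$. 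By Abel summation $E(S)=\sum_{l=1}^{n-1}(\mu_l-\mu_{l+1})\,g_l(S)$ with $g_l(S)=|[1,l]\setminus S|$, exhibiting $E(S)$ as a weighted sum of gap counts whose weights are concentrated at the drop positions of $\mu_{\ul\la}$, which occur inside $[1,l(\la)]\cup[n-l(\la')+1,n]$ and at the boundary $l=l(\la)$.

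The heart of the proof is to show $L+E(S)<F(k)$ whenever $L\le i_--2$. I plan to use the elementary length bound $\mathrm{len}(S)\ge(k-r)(l-r)$ at $r:=|S\cap[1,l]|$—a pigeonhole on ordered subsets—which gives $g_l(S)\le\tfrac12\bigl(l-k+\sqrt{(l-k)^2+4L}\bigr)$, then sum these weighted bounds over the drop positions of $\mu_{\ul\la}$ to control $E(S)$. Verifying $L+E(S)<F(k)$ for $L\le i_--2$ then becomes a case analysis on where $k$ and $k+L$ lie relative to $l(\la)$ and $n-l(\la')$, closed off by the critical inequalities $a(i_--1)>0$ and $a(i_-)\le 0$ from the minimality of $i_-$. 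The main obstacle is that the gap-count bounds at different drop positions cannot be simultaneously saturated by a single $S$, so the naive summed upper bound on $E(S)$ is in general loose. To sharpen it, I plan an exchange argument reducing to extremal "packed head plus packed tail" subsets $\{1,\dots,r\}\cup\{n-k+r+1,\dots,n\}$, whose lengths $(k-r)(n-k)$ take discrete values; intermediate $L$ are handled by interpolating configurations. A subtle point is that in examples with several substantial drops in $\mu_{\ul\la}$, non-packed profiles can beat packed ones for specific $(k,L)$; this will have to be treated by a discrete rearrangement argument tracking the monotonicity of $(d_1,\dots,d_k)$ explicitly, likely by induction on the number of distinct values among the $d_j$.
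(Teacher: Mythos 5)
The statement you are addressing is stated in the paper only as a \emph{conjecture} (``some computation suggests the following conjecture''); the paper offers no proof, so the only question is whether your argument is complete on its own terms. Your reformulation is sound as far as it goes: the identification of $\sigma(\rho+\mu_{\ulla})>0$ with positivity of $\sum_{i\in S}a(i)$ over the sets $S=\sigma^{-1}(\{1,\dots,k\})$, the formula $\operatorname{len}(S)=\sum_{i\in S}i-\binom{k+1}{2}$ for the minimal length realizing $S$, the identity $\sum_{i\in S}a(i)=F(k)-L-E(S)$ with $F(k)\ge k(n-k)/2>0$, and the upper-bound constructions $S=\{m\}$ and $S=[n]\setminus\{n+1-m\}$ (using $\sum_i a(i)=0$) all check out. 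One step you use silently does need justification: passing from Borel's definition (the largest $q$ such that \emph{every} $\sigma\in W^q$ satisfies $\sigma(\rho+\mu_{\ulla})>0$) to ``one less than the minimal length of a bad $\sigma$'' requires that badness propagate upward in length. This is true --- if $l(s_j\sigma)=l(\sigma)+1$ then $\sigma^{-1}(j)<\sigma^{-1}(j+1)$, and since $a$ is strictly decreasing the $j$-th partial sum can only decrease while the others are unchanged --- but it must be said.

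The genuine gap is the lower bound, which you yourself flag as unresolved. Proving $L+E(S)<F(k)$ for every $S$ with $L\le i_--2$ \emph{is} the content of the conjecture, and what you offer for it is a plan rather than an argument: you concede that the summed gap-count bounds on $E(S)$ are ``in general loose'', that the proposed exchange reduction to packed head-plus-tail configurations can be beaten by non-packed profiles when $\mu_{\ulla}$ has several substantial drops, and that the repair (``a discrete rearrangement argument\dots likely by induction'') is not carried out. That extremal interaction between the drop positions of $\mu_{\ulla}$ and the displacement profile $(d_1,\dots,d_k)$ is exactly where the difficulty lives --- it is what the paper's Theorem \ref{slstablerange} only controls under the hypothesis $n\ge\nB(\ulla)$ --- so until it is executed the conjecture remains open. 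What you do have is a correct combinatorial reformulation and a complete proof of the inequality $C'(\SL(n,\Q),V_{\ulla})\le \min\{i\mid a(i)\le 0\text{ or }a(n+1-i)\ge0\}-2$, which is worth recording, but not a proof of the stated equality.
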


\subsection{Stable range for the cohomology of $\GL(n,\Z)$}
Now we regard $V_{\ul\lambda}$ as an irreducible algebraic $\GL(n,\Z)$-representation.
We obtain a stable range for cohomology of $\GL(n,\Z)$ with coefficients in $V_{\ul\lambda}$.

For a bipartition $\ul\lambda$ and a non-negative integer $p$, set $$\nB(\ul\lambda,p)=\max(\nB(\ul\lambda), 2p+2)\ge 2.$$
Borel's result (Theorem \ref{Borelbipartition}) and the estimate of Borel's constant $C'(\SL(n,\Q),V_{\ul\la})$ (Theorem \ref{slstablerange}) imply the following.
% The isomorphisms below between the cohomology of $\SL(n,\Z)$ and $\GL(n,\Z)$ are likely well known, but we provide proofs here since we could not find suitable references that work well for all the cases we need.

\begin{corollary}[weaker than Theorem \ref{Borel-Li-Sun}]\label{corollaryhomologyofGL}
  Let $\ul\lambda$ be a bipartition, and let $p\ge0$ and $n\ge \nB(\ul\lambda,p)$ be integers.  Then we have the following.
\begin{enumerate}
\item  If $\ul\lambda=(0,0)$, i.e., $V_{\ul\la}=\Q$, then we have
\begin{gather*}
    H^*(\SL(n,\Z),\Q)\cong H^*(\GL(n,\Z), \Q)\cong \bigwedge_{\Q} (x_1, x_2,\ldots), \quad \deg x_i=4i+1
\end{gather*}
for cohomological degree $*\le p$.
\item  If $\ul\lambda\neq (0,0)$, then we have
  \begin{gather*}
      H^p(\SL(n,\Z), V_{\ul\lambda})= H^p(\GL(n,\Z), V_{\ul\lambda})=0.
  \end{gather*}
\end{enumerate}
\end{corollary}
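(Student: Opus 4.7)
The plan is to combine Theorem~\ref{Borelbipartition} with the estimate of Theorem~\ref{slstablerange} to obtain the statements for $\SL(n,\Z)$, and then transfer from $\SL(n,\Z)$ to $\GL(n,\Z)$ via the finite quotient $\GL(n,\Z)/\SL(n,\Z)\cong\Z/2$.

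First I would verify that the hypothesis $n\ge\nB(\ul\lambda,p)$ forces $p\le N'(\SL(n,\Z),V_{\ul\lambda})$, so that Borel's Theorem~\ref{Borelbipartition} applies. By definition $N'$ is the minimum of $M(\SL(n,\R),V_{\ul\lambda})$ and $C'(\SL(n,\Q),V_{\ul\lambda})$, so both must be bounded below by $p$. The inequality $n\ge\nB(\ul\lambda,p)\ge 2p+2$, together with the lower bound $M(\SL(n,\R),V_{\ul\lambda})\ge n-2$ recalled at the start of Section~\ref{homologyofGL}, gives $M\ge 2p\ge p$. Likewise, $n\ge\nB(\ul\lambda)$ allows us to invoke Theorem~\ref{slstablerange} to conclude
\[
C'(\SL(n,\Q),V_{\ul\lambda})=\lfloor n/2\rfloor -1\ge \lfloor (2p+2)/2 \rfloor - 1 = p.
\]
This is precisely why the term $2p+2$ is built into the definition of $\nB(\ul\lambda,p)$. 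Hence $N'(\SL(n,\Z),V_{\ul\lambda})\ge p$, and Theorem~\ref{Borelbipartition} yields the $\SL(n,\Z)$ versions of both (1) and (2).

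To pass from $\SL(n,\Z)$ to $\GL(n,\Z)$, I would use the short exact sequence
\[
1\to \SL(n,\Z)\to \GL(n,\Z)\xto{\det}\{\pm 1\}\to 1.
\]
Since the order of $\Z/2$ is invertible in $\Q$, the Lyndon--Hochschild--Serre spectral sequence degenerates to an isomorphism
\[
H^p(\GL(n,\Z),V_{\ul\lambda})\cong H^p(\SL(n,\Z),V_{\ul\lambda})^{\Z/2}.
\]
In case (2) the right-hand side vanishes automatically. For case (1) one takes $V_{\ul\lambda}=\Q$: the generators $x_i$ are $\Z/2$-invariant because Borel's original construction applies equally well to the arithmetic group $\GL(n,\Z)$, so each $x_i$ lifts from $H^*(\GL(n,\Z),\Q)$, and the exterior algebra description transfers to $\GL(n,\Z)$ in the same stable range.

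There is no genuine obstacle here: the combinatorial heart of the argument is Theorem~\ref{slstablerange}, which has already been established. What remains is the bookkeeping match between $\nB(\ul\lambda,p)$ and Borel's stable range, together with the standard finite-index comparison between $\SL(n,\Z)$ and $\GL(n,\Z)$. The only point requiring care is ensuring that the single condition $n\ge\nB(\ul\lambda,p)$ simultaneously controls both constants $M$ and $C'$ appearing in Borel's bound, which is exactly what the $\max$ in the definition of $\nB(\ul\lambda,p)$ guarantees.
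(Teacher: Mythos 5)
Your argument matches the paper's own proof almost line for line: both bound $N'(\SL(n,\Z),V_{\ul\lambda})$ from below by $p$ by combining the hypothesis $n\ge\nB(\ul\lambda,p)$ with Theorem~\ref{slstablerange} and the bound $M(\SL(n,\R),V_{\ul\lambda})\ge n-2$, then invoke Theorem~\ref{Borelbipartition} and pass to $\GL(n,\Z)$ via the Hochschild--Serre spectral sequence for $1\to\SL(n,\Z)\to\GL(n,\Z)\to\Z/2\Z\to1$. The only cosmetic difference is that you check $M\ge p$ and $C'\ge p$ separately, whereas the paper first notes $C'\le M$ (so $N'=C'$) and then checks $p\le C'$; both yield the same conclusion.
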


\begin{proof}
Since we have $n\ge \nB(\ul\lambda,p)\ge \nB(\ul\lambda)\ge 2$, by Theorem \ref{slstablerange}, we have
$C'(\SL(n,\Q), V_{\ul\lambda})=\lfloor n/2\rfloor-1$.
Therefore, we have 
$$C'(\SL(n,\Q), V_{\ul\lambda})=\lfloor n/2\rfloor-1\le n-2\le M(\SL(n,\R),V_{\ul\lambda}).$$
Since we have $n\ge \nB(\ul\lambda,p)\ge 2p+2$, we have 
$$p\le \lfloor n/2\rfloor-1=C'(\SL(n,\Q), V_{\ul\lambda})=N'(\SL(n,\Z), V_{\ul\lambda}).$$
Therefore, the case of $\SL(n,\Z)$ follows from Theorem \ref{Borelbipartition}.

The case of $\GL(n,\Z)$ follows from the case for $\SL(n,\Z)$ and the Hochschild--Serre spectral sequence for the short exact sequence
$$
 1\to \SL(n,\Z)\to \GLnZ\to \Z/{2\Z}\to 1.
$$
\end{proof}

\section{Kupers, Miller and Patzt's method}
\label{secImprovedVanishing}

Kupers, Miller and Patzt \cite{Kupers-Miller-Patzt} improved Borel's original stable range for coefficients in polynomial $\GL(n,\Z)$-representations indexed by partitions.
Here we adapt their arguments to the case of coefficients in algebraic $\GL(n,\Q)$-representations indexed by bipartitions.

\subsection{Polynomial $\VIC$-modules}

There are mutually related theories that can be used in the study of stability of 
sequence of $\GL(n,\Z)$-representations
such as coefficient systems \cite{Dwyer,VanDerKallen,Randal-Williams--Wahl}, representation stability \cite{Church-Farb}, central stability \cite{Putman} 
and $\VIC$-modules \cite{Putman--Sam}.
The notion of polynomiality was introduced by van der Kallen \cite{VanDerKallen} for coefficient systems, and was generalized by Randal-Williams and Wahl \cite{Randal-Williams--Wahl}. See also \cite{Patzt, Kupers-Miller-Patzt}.
Their definition is stronger than Djament and Vespa's strong polynomial functors \cite{Djament--Vespa13}.

Here we recall the notions of $\VIC$-modules and polynomial $\VIC$-modules.

Let $\VIC=\VIC(\Z)$ denote the category of finitely generated free abelian groups and and injective morphisms with chosen complements.
The Hom-set for a pair of objects $M$ and $N$ is given by 
$$\Hom_{\VIC}(M,N)=\{(f,C)\mid f: M\hookrightarrow N,\;
N=\im(f)\oplus C\}.$$
A $\VIC$-module is a functor from $\VIC$ to the category $\Vect_{\Q}$ of $\Q$-vector spaces and linear maps.
A \emph{morphism} (also called a \emph{$\VIC$-module map}) $f:V\to V'$ of $\VIC$-modules $V$ and $V'$ is a natural transformation.  The $\VIC$-modules and morphisms form an abelian category $\VICmod$, as is the case for the category of $\calC$-modules for any essentially small category $\calC$.
The category $\VICmod$ also has a symmetric monoidal category structure whose tensor product is defined objectwise, i.e., $(V\otimes V')(M)=V(M)\otimes V'(M)$ for $M\in\Ob(\VIC)$, and whose monoidal unit is given by the constant functor with value $\Q$.

For each VIC-module $V$ and an integer $n\ge0$, the vector space $V(n)$ is naturally equipped with a $\GLnZ$-module structure.

Let $V$ be a $\VIC$-module.
Define $\VIC$-modules $\ker V$ and $\coker V$ by
\begin{gather*}
\ker V(M):= \ker (V(M)\to V(M\oplus \Z)),\\
\coker V(M):= \coker (V(M)\to V(M\oplus \Z))
\end{gather*}
for any object $M$ of $\VIC$, 
where $V(M)\to V(M\oplus \Z)$ is induced by the inclusion $M\hookrightarrow M\oplus \Z$.
Define the polynomiality of $\VIC$-modules inductively as follows.
Let $m\ge -1$.
We call $V$ \emph{polynomial of degree $-1$ in ranks $> m$} if $V(M)=0$ for any object $M\in \Ob(\VIC)$ with rank $>m$.
For $r\ge 0$, we call $V$ \emph{polynomial of degree $\le r$ in ranks $> m$} if $\ker V$ is polynomial of degree $-1$ in ranks $> \max(m-r-1,-1)$ and if $\coker V$ is polynomial of degree $\le r-1$ in ranks $> \max(m-1,-1)$.
We call $V$ \emph{polynomial of degree (exactly) $r$ in ranks $>m$} if $V$ is polynomial of degree $\le r$ in ranks $>m$ and if $V$ is not polynomial of degree $\le r-1$ in ranks $>m$.
If $m=-1$, then we usually omit ``in ranks $>-1$'' and just write polynomial of degree $\le r$. 

\begin{remark}
Our definition of polynomial VIC-modules is slightly stronger than that in \cite{Kupers-Miller-Patzt} for a technical reason. We need this strengthened definition to have Lemma \ref{lemmapatzt}.
In \cite{Kupers-Miller-Patzt}, they required the condition that if $V$ is polynomial of degree $\le r$ in ranks $> m$, then $\ker V$ is polynomial of degree $-1$ in ranks $> m$ instead of in ranks $>\max(m-r-1,-1)$.
Patzt \cite{Patzt} used a still weaker condition in which $\coker V$ is polynomial of degree $\le r-1$ in ranks $>m$.
If $m=-1$, then all of the three definitions of polynomiality coincide.
\end{remark}

Let $V$ be a polynomial $\VIC$-module of degree $\le r$ in ranks $>m$.
The polynomiality of finite-dimensional $\VIC$-modules imply the polynomiality of dimensions, that is, if $\dim(V(\Z^n))$ is finite for each $n>m$, then 
there is a polynomial $f(x)$ such that we have 
$\dim(V(\Z^n))=f(n)$ for any $n>m$.
Therefore, we obtain the following lemma.

\begin{lemma}\label{deg}
Let $V$ be a polynomial $\VIC$-module of degree $\le r$ in ranks $>m$.
If there exists a polynomial $P(x)$ of degree $r$ such that for each $n>m$, $\dim(V(\Z^n))=P(n)$, then $V$ is polynomial of degree $r$ in ranks $>m$.
\end{lemma}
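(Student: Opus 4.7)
The plan is to prove the contrapositive: assuming the known bound that $V$ is polynomial of degree $\le r$ in ranks $>m$, I will show that if $V$ were in fact polynomial of degree $\le r-1$ in ranks $>m$, then $\dim V(\Z^n)$ would agree on the infinite set $\{n>m\}$ with a polynomial of degree $\le r-1$. Combined with the hypothesis that the same dimensions agree with the degree-$r$ polynomial $P$ on that set, the fact that two polynomials over $\Q$ coinciding on infinitely many integers must be equal forces $\deg P\le r-1$, contradicting $\deg P=r$.

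To establish the strengthened polynomiality-of-dimensions statement ``polynomial degree $\le s$ in ranks $>m$ implies $\dim V(\Z^n)$ agrees with a polynomial in $n$ of degree $\le s$ for $n>m$'', I would induct on $s\ge-1$. The base case $s=-1$ is immediate, since then $V(\Z^n)=0$ for $n>m$. For the inductive step, I would use the four-term exact sequence
\begin{gather*}
0\to\ker V(\Z^n)\to V(\Z^n)\to V(\Z^{n+1})\to\coker V(\Z^n)\to 0,
\end{gather*}
which yields the discrete difference
\begin{gather*}
\dim V(\Z^{n+1})-\dim V(\Z^n)=\dim\coker V(\Z^n)-\dim\ker V(\Z^n).
\end{gather*}
The strengthened definition (vanishing of $\ker V$ in ranks $>\max(m-s-1,-1)$ and polynomial degree $\le s-1$ of $\coker V$ in ranks $>\max(m-1,-1)$) ensures that for $n>m$ the kernel term vanishes while the cokernel term, by the inductive hypothesis applied to $\coker V$, agrees with a polynomial in $n$ of degree $\le s-1$. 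Summing this difference from a chosen base value $n_0>m$ then expresses $\dim V(\Z^n)$ as a polynomial of degree $\le s$ for all $n>m$, closing the induction.

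The main obstacle, such as it is, is the careful tracking of rank ranges. The very reason for the authors' strengthened definition of polynomial $\VIC$-module (compared to the conventions of Patzt or of Kupers--Miller--Patzt, as noted in the remark preceding the lemma) is precisely to make this inductive bookkeeping work: one needs the vanishing range of $\ker V$ to extend down to all $n>m$ and, simultaneously, the inductive hypothesis on $\coker V$ to apply at all those same values. Once the refined polynomiality-of-dimensions statement is established, Lemma \ref{deg} follows immediately from the polynomial-uniqueness argument in the first paragraph.
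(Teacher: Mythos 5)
Your proof is correct and takes the same route the paper intends: the paper justifies the lemma by citing, without proof, the fact that a polynomial $\VIC$-module of degree $\le s$ in ranks $>m$ with finite-dimensional values has $\dim V(\Z^n)$ agreeing with a polynomial of degree $\le s$ for $n>m$, and then concludes. You have simply supplied the induction on degree (via the four-term exact sequence and summation of the discrete difference) and the polynomial-uniqueness contradiction that the paper leaves implicit.
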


We have the following degree-change lemma.

\begin{lemma}\label{degreechange}
Let $s\ge r$ and $n\ge m+s-r$.
If $V$ is a polynomial $\VIC$-module of degree $\le r$ in ranks $>m$, then $V$ is polynomial of degree $\le s$ in ranks $>n$.
\end{lemma}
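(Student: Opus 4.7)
My plan is to proceed by induction on $s$, with the base case $s = -1$ forcing $r = -1$ and $n \ge m$, so that $V(M) = 0$ for rank $>m$ trivially gives $V(M) = 0$ for rank $>n$. For the inductive step, I separate into the subcases $s = r$ and $s > r$.

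For $s = r$, the statement is a ``rank-widening'' property: $V$ polynomial of degree $\le r$ in ranks $>m$ together with $n \ge m$ imply $V$ polynomial of degree $\le r$ in ranks $>n$. Unfolding the definition, both $\max(n-r-1,-1) \ge \max(m-r-1,-1)$ and $\max(n-1,-1) \ge \max(m-1,-1)$, so the condition on $\ker V$ transfers directly, and the condition on $\coker V$ follows by an auxiliary induction on $r$. For $s > r$, I apply the outer inductive hypothesis (with $s$ replaced by $s-1$ and $n$ replaced by $n-1$, which is allowed since $n-1 \ge m+(s-1)-r$) to obtain $V$ polynomial of degree $\le s-1$ in ranks $>n-1$. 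Unpacking this yields $\ker V$ polynomial of degree $-1$ in ranks $>\max(n-s-1,-1)$, as required; meanwhile $\coker V$ comes out polynomial of degree $\le s-2$ in ranks $>\max(n-2,-1)$, and I would invoke the inductive hypothesis once more to promote its degree to $s-1$ in ranks $>\max(n-1,-1)$.

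The hard part will be the boundary case $n = 0$, $m = -1$, $s = r+1$, where the $\max(\cdot,-1)$ saturation causes $\max(n-1,-1) = -1$ while $\max(n-2,-1)+1 = 0$, so the direct inductive step cannot supply $\coker V$ in ranks $>-1$. I would handle this via a separate auxiliary claim: if $V$ is polynomial of degree $\le r$ in ranks $>-1$, then $V$ is polynomial of degree $\le r+1$ in ranks $>-1$. This is proved by induction on $r$; the base case $r = -1$ uses that polynomial of degree $-1$ in ranks $>-1$ forces $V(M) = 0$ on every object of $\VIC$ (hence $V$ is polynomial of every degree), and the inductive step uses that $\ker V$ is already zero in this regime, while $\coker V$ is promoted from degree $\le r-1$ to degree $\le r$ in ranks $>-1$ by the induction. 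Combined with rank-widening, this closes the edge case and completes the proof.
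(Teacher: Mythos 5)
Your proof is correct and follows the same basic strategy as the paper: unfold the definition, transfer the $\ker V$ condition directly using $n-s-1\ge m-r-1$, and handle $\coker V$ by a recursive application of the lemma. However, you are more careful than the paper at the $\coker V$ step. The paper asserts the passage from ``$\coker V$ of degree $\le r-1$ in ranks $>\max(m-1,-1)$'' to ``degree $\le s-1$ in ranks $>\max(n-1,-1)$'' in one line, but (as you note) the recursive application requires $\max(n-1,-1)\ge\max(m-1,-1)+(s-r)$, which fails precisely when $m=-1$ and $n=s-r-1\ge 0$, e.g., $n=0$, $m=-1$, $s=r+1$. Your auxiliary claim --- that ``polynomial of degree $\le r$ in ranks $>-1$'' implies ``polynomial of degree $\le r+1$ in ranks $>-1$'' --- is exactly what is needed to close this; your induction on $r$ for the auxiliary claim and your rank-widening lemma both check out against the definition, and the inner inductive applications are all to strictly smaller target degrees, so there is no circularity. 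In short, your argument is a correct and more explicit version of the paper's, and it patches a genuine (though harmless) gap in the published proof; the patch is not needed in the paper's application in Proposition \ref{tracelesspoly}, where $m\ge 0$.
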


\begin{proof}
If $V$ is polynomial of degree $\le r$ in ranks $>m$, then 
$\ker V$ is polynomial of degree $-1$ in ranks $>\max(m-r-1,-1)$.
Since we have $n-s-1\ge m-r-1$, $\ker V$ is polynomial of degree $-1$ in ranks $>\max(n-s-1,-1)$.
Also $\coker V$ is polynomial of degree $\le r-1$ in ranks $>\max(m-1,-1)$.
Since we have $s\ge r$ and $n\ge m+(s-r)\ge m$, $\coker V$ is polynomial of degree $\le s-1$ in ranks $>\max(n-1,-1)$.
Therefore, $V$ is polynomial of degree $\le s$ in ranks $>n$.
\end{proof}

Then we have the following properties of polynomial $\VIC$-modules, in which we adapt Patzt's lemma to our definition of polynomiality.

\begin{lemma}[Cf. Patzt {\cite[Lemma 7.3]{Patzt}}]\label{lemmapatzt}
 (a) Let $m,r\ge-1$ be integers.
  Let
 $$0\to V'\to V\to V''\to 0$$
 be an exact sequence in ranks $> \max(m-r-1,-1)$, that is, we have an exact sequence
 \begin{gather}\label{shortexact}
     0\to V'(M)\to V(M)\to V''(M)\to 0
 \end{gather}
 for each object $M\in \Ob(\VIC)$ of rank $>\max(m-r-1,-1)$.
 If two of the above three $\VIC$-modules are polynomial of degree $\le r$ in ranks $>m$, then so is the third.
 
 (b)
Let $V$ and $W$ be polynomial $\VIC$-modules of degrees $\le r$ and $\le s$, respectively.
Then the tensor product 
$V\otimes W$ has polynomial degree $\le r+s$.
\end{lemma}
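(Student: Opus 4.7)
The plan for (a) is to use strong induction on $r$, applying the snake lemma to the commutative diagram with rows the short exact sequence at $M$ and at $M\oplus\Z$ and vertical arrows the structure maps. This yields a natural six-term exact sequence of $\VIC$-modules, valid in ranks $>\max(m-r-1,-1)$:
\begin{gather*}
0 \to \ker V' \to \ker V \to \ker V'' \xto{\delta} \coker V' \to \coker V \to \coker V'' \to 0.
\end{gather*}
The base case $r=-1$ is immediate from exactness. For the inductive step, the cases in which $V',V''$ or $V,V''$ are polynomial of degree $\le r$ are straightforward: the vanishing of their kernels in the appropriate range collapses the six-term sequence to a short exact sequence of cokernels, to which the inductive hypothesis at $r-1$ applies to give the polynomial degree bound on the third term.

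The delicate case is when $V$ and $V'$ are polynomial of degree $\le r$ and one must show $\ker V''=0$ in ranks $>\max(m-r-1,-1)$. The snake lemma provides only an injection $\delta\colon \ker V'' \hookrightarrow \coker V'$ in that range. The key point is that any $x\in\ker V''(M)$ has image zero in $V''(M\oplus\Z)$ by definition of $\ker V''$, so the shift map $\ker V''(M)\to \ker V''(M\oplus\Z)$ is identically zero. By naturality of $\delta$, the shift of $\delta(x)$ in $\coker V'(M\oplus\Z)$ must vanish. But the strengthened polynomiality of $V'$ yields precisely $\ker(\coker V')=0$ in ranks $>\max(m-r-1,-1)$, i.e.\ the shift on $\coker V'$ is injective there; this is the reason for the paper's strengthened definition, as explained in the remark. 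Combined with the injectivity of $\delta$, we conclude $\delta(x)=0$ and hence $x=0$. The six-term sequence then collapses to a short exact sequence of cokernels and the inductive hypothesis at $r-1$ yields the bound on $\coker V''$.

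For (b), I would proceed by induction on $r+s$. Writing $\tau U$ for the shifted module $(\tau U)(M)=U(M\oplus\Z)$, the base case $r=-1$ or $s=-1$ is trivial since then $V\otimes W=0$. For the inductive step, $\ker V=0=\ker W$ implies that $V\otimes W \hookrightarrow \tau(V\otimes W)=\tau V\otimes \tau W$ is injective---it factors as $V\otimes W \hookrightarrow V\otimes \tau W \hookrightarrow \tau V\otimes \tau W$ over $\Q$---so $\ker(V\otimes W)=0$, and the induced two-step filtration gives a short exact sequence
\begin{gather*}
0 \to V\otimes \coker W \to \coker(V\otimes W) \to \coker V \otimes \tau W \to 0.
\end{gather*}
Applying (a) to the short exact sequences $0 \to V \to \tau V \to \coker V \to 0$ and $0 \to W \to \tau W \to \coker W \to 0$, combined with the simple monotonicity in degree at $m=-1$ (polynomial of degree $\le r'$ implies polynomial of degree $\le r$ for $r\ge r'$, as one sees by unwinding the definition), shows that $\tau V$ and $\tau W$ are polynomial of degrees $\le r$ and $\le s$ respectively. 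The inductive hypothesis then gives that the two outer terms of the displayed sequence are polynomial of degree $\le r+s-1$, so (a) gives $\coker(V\otimes W)$ polynomial of degree $\le r+s-1$, whence $V\otimes W$ has polynomial degree $\le r+s$. The main obstacle throughout is case (i) of (a) above, whose resolution is the main motivation for the paper's strengthened definition of polynomial $\VIC$-modules.
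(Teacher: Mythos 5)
Your proof of part (a) is essentially the paper's own argument, down to the key observation: for $x\in\ker V''(M)$ the shift $\ker V''(M)\to\ker V''(M\oplus\Z)$ is identically zero (equivalently $\ker\ker V''=\ker V''$), and the strengthened polynomiality of $V'$ ensures $\ker\coker V'=0$ in ranks $>\max(m-r-1,-1)$, so injectivity of the connecting map $\delta$ forces $\ker V''=0$ there. The paper phrases this with the commutative square $\ker V''=\ker^2V''\to\ker\coker V'=0$ rather than in terms of the shift of $\delta(x)$, but it is the same mechanism, and you correctly identify this as the reason for the authors' stronger notion of polynomiality.

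For part (b), however, you and the paper diverge: the paper simply defers to Patzt's Lemma 7.3(b), remarking that the definitions of polynomiality agree when $m=-1$, whereas you supply a self-contained proof. Your argument is correct: since $\ker V=\ker W=0$, the two-step filtration $V\otimes W\subset V\otimes\tau W\subset\tau V\otimes\tau W=\tau(V\otimes W)$ exhibits $\ker(V\otimes W)=0$ and produces the short exact sequence
\begin{gather*}
0\to V\otimes\coker W\to\coker(V\otimes W)\to\coker V\otimes\tau W\to 0,
\end{gather*}
whose outer terms have degree $\le r+s-1$ by the inductive hypothesis on $r+s$ (using part (a) and the easy monotonicity in degree at $m=-1$ to see $\tau V$, $\tau W$ have degrees $\le r$, $\le s$). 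Part (a) then bounds the degree of $\coker(V\otimes W)$ by $r+s-1$, giving the claim. This trades a citation for a short direct argument, which is arguably more transparent given that the paper's definition of polynomiality differs from Patzt's away from $m=-1$. One very minor remark on (a): in the ``straightforward'' cases you emphasize only the cokernel sequence, but one should also record that \eqref{kerex} forces the kernel of the third term to vanish in the appropriate range; this is immediate from the same collapse and is implicit in your phrasing.
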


\begin{proof}
(b) is essentially a special case of \cite[Lemma 7.3. (b)]{Patzt}.

Let us prove (a) by adapting the proof of \cite[Lemma 7.3 (a)]{Patzt}.
The case of $r=-1$ is obvious. We use induction on $r$. Suppose that the case of degree $\le r-1$ holds.
From \eqref{shortexact}, by the Snake Lemma, we have an exact sequence
\begin{gather}\label{kercokerex}
    0\to \ker V'\to \ker V\to \ker V'' \to \coker V'\to \coker V\to \coker V''\to 0
\end{gather}
in ranks $>\max(m-r-1,-1)$.

If $V''$ is polynomial of degree $\le r$ in ranks $>m$, then we have $\ker V''=0$ in ranks $>\max(m-r-1,-1)$.
Therefore, the exact sequence \eqref{kercokerex} splits into two short exact sequences in ranks $> \max(m-r-1,-1)$
\begin{gather}\label{kerex}
    0\to \ker V'\to \ker V\to \ker V''\to 0,
\end{gather}
\begin{gather}\label{cokerex}
    0\to \coker V'\to \coker V\to \coker V''\to 0.
\end{gather}

Suppose $V$ and $V'$ are polynomial of degree $\le r$ in ranks $>m$. Then $\coker V'$ is polynomial of degree $\le r-1$ in ranks $>m-1$.
Therefore, we have $\ker\coker V'=0$ in ranks $>\max(m-r-1,-1)$.
By the following commutative diagram
\begin{gather*}
\xymatrix{
\ker V'' \ar[r] & \coker V'\\
\ker^2 V'' \ar[r]\ar[u]^{=} & \ker\coker V'=0 , \ar[u]
}
\end{gather*}
we also have two short exact sequences \eqref{kerex} and \eqref{cokerex}.

Since two of $\ker V'$, $\ker V$ and $\ker V''$ are zero in ranks $> \max(m-r-1,-1)$, the other is also zero in ranks $> \max(m-r-1,-1)$ by \eqref{kerex}.
Since two of $\coker V'$, $\coker V$ and $\coker V''$ are polynomial of degree $\le r-1$ in ranks $>\max(m-1,-1)$, and since we have an exact sequence \eqref{cokerex} in ranks $>\max(m-r-1,-1)$, we can use the induction hypothesis, which completes the proof of (a).
\end{proof}

We also need the following lemma.

\begin{lemma}\label{lempolydirectsum}
Let $V$ and $V'$ be $\VIC$-modules such that $V\oplus V'$ is polynomial of degree $\le r$ in ranks $>m$.
Then both $V$ and $V'$ are polynomial of degree $\le r$ in ranks $>m$.
\end{lemma}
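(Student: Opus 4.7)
The plan is to proceed by induction on the polynomial degree $r$, exploiting the fact that the operations $\ker$ and $\coker$ (as defined for $\VIC$-modules in the paper) are additive, i.e., they commute with finite direct sums. Concretely, for any $\VIC$-modules $V$ and $V'$ we have natural identifications
\begin{gather*}
\ker(V\oplus V')=\ker V\oplus\ker V',\qquad \coker(V\oplus V')=\coker V\oplus\coker V',
\end{gather*}
since direct sums of $\Q$-vector spaces are exact and the structure maps of $V\oplus V'$ are the direct sum of those of $V$ and $V'$.

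For the base case $r=-1$, suppose $V\oplus V'$ is polynomial of degree $-1$ in ranks $>m$, so $(V\oplus V')(M)=V(M)\oplus V'(M)=0$ for every object $M$ of rank $>m$. Then $V(M)=V'(M)=0$ in this range, giving the conclusion trivially.

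For the inductive step, assume the lemma holds up to degree $r-1$ and suppose $V\oplus V'$ is polynomial of degree $\le r$ in ranks $>m$. By definition, $\ker(V\oplus V')=\ker V\oplus\ker V'$ is polynomial of degree $-1$ in ranks $>\max(m-r-1,-1)$, so by the base case applied to this direct sum, both $\ker V$ and $\ker V'$ vanish in that range, i.e., are polynomial of degree $-1$ in ranks $>\max(m-r-1,-1)$. Similarly $\coker(V\oplus V')=\coker V\oplus\coker V'$ is polynomial of degree $\le r-1$ in ranks $>\max(m-1,-1)$, so the induction hypothesis (applied at degree $r-1$ with parameter $\max(m-1,-1)$) yields that both $\coker V$ and $\coker V'$ are polynomial of degree $\le r-1$ in ranks $>\max(m-1,-1)$. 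Combining these two facts, $V$ and $V'$ each satisfy the defining conditions to be polynomial of degree $\le r$ in ranks $>m$.

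There is essentially no obstacle: the lemma really just records that the inductive definition of polynomiality is stable under direct summands, and the whole argument reduces to the additivity of $\ker$ and $\coker$. The only point requiring a moment of care is bookkeeping the ``rank $>\max(m-r-1,-1)$'' and ``rank $>\max(m-1,-1)$'' thresholds correctly so that the induction hypothesis applies with matching parameters; since these thresholds are defined purely in terms of $(m,r)$ and are shared by $V\oplus V'$, $V$ and $V'$, this is immediate.
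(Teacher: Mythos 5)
Your proof is correct and takes essentially the same route as the paper's: both proceed by induction on the polynomial degree $r$, using the additivity of $\ker$ and $\coker$ over direct sums and applying the induction hypothesis to $\coker V\oplus\coker V'$. The only cosmetic difference is that you explicitly verify the two defining conditions for $V$ and $V'$, while the paper phrases the last step as replacing $V,V'$ by $\coker V,\coker V'$ and invoking the induction hypothesis.
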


\begin{proof}
We use an induction on degree $r$.
It is easy to check the case of $r=-1$ and any $m\ge -1$.
Suppose that the statement holds for degree $\le r-1$ and any $m\ge -1$.
Let $V\oplus V'$ be a polynomial $\VIC$-module of degree $\le r$ in ranks $>m$.
Then we have $\ker(V\oplus V')=0$ in ranks $>\max(m-r-1,-1)$, and $\coker(V\oplus V')$ is polynomial of degree $\le r-1$ in ranks $>\max(m-1,-1)$.
Since we have $\ker(V\oplus V')=(\ker V)\oplus(\ker V')$, it follows that $\ker V= \ker V'=0$ in ranks $>\max(m-r-1,-1)$.
Since we also have $\coker(V\oplus V')=(\coker V)\oplus(\coker V')$, it follows that $(\coker V)\oplus(\coker V')$ is polynomial of degree $\le r-1$ in ranks $>\max(m-1,-1)$.
Therefore, we can replace $V$ and $V'$ with $\coker V$ and $\coker V'$, respectively, and by using the induction hypothesis, it follows that $V$ and $V'$ are polynomial of degree $\le r$ in ranks $>m$.
\end{proof}

\subsection{Polynomial $\VIC$-module $V^{\langle p,q\rangle}$ of traceless tensors}

For an object $M$ of $\VIC$, which is a free abelian group, let $M_{\Q}=M\otimes_{\Z} \Q$.

Let $V^{1,0}$ (resp. $V^{0,1}$) denote the $\VIC$-module such that $V^{1,0}(M)=M_{\Q}$ (resp. $V^{0,1}(M)=M_{\Q}^{*}$) for an object $M$ of $\VIC$.
Then the $\VIC$-modules $V^{1,0}$ and $V^{0,1}$ are polynomial of degree $1$ by \cite[Definition 3.3]{Randal-Williams}.
For $p,q\ge 0$, let 
$$V^{p,q}=(V^{1,0})^{\otimes p} \otimes (V^{0,1})^{\otimes q},\quad M\mapsto M_{\Q}^{p,q}$$
denote the tensor product of copies of $\VIC$-modules $V^{1,0}$ and $V^{0,1}$.
By Lemma \ref{lemmapatzt}, the $\VIC$-module $V^{p,q}$ is polynomial of degree $\le p+q$, which is actually of degree $p+q$ by Lemma \ref{deg} since we have $\dim(V^{p,q}(n))=n^{p+q}$ for $n\ge 0$.

The construction of the $\GL(n,\Q)$-module $H^{\langle p,q\rangle}$ in Section \ref{alg-glnz-rep} can be extended into $\VIC$-modules as follows.

Note that there is a VIC-module map
\begin{gather*}
    c: V^{1,1}\to V^{0,0},
\end{gather*}
such that for each $M\in\Ob(\VIC)$ the map $c_M:V^{1,1}(M)\to V^{0,0}(M)$ is the evaluation map $M_\Q\ot M_\Q^*\to \Q$, $v\ot f\mapsto f(v)$.
Combining this VIC-module map $c$ with identity and symmetry VIC-module maps in the symmetric monoidal category $\VICmod$, we obtain a $\VIC$-module map
\begin{gather*}
    c_{i,j}: V^{p,q}\to V^{p-1,q-1},
\end{gather*}
for each pair $(i,j)\in [p]\times [q]:=\{1,\dots,p\}\times \{1,\dots,q\}$,
where $(c_{i,j})_M: M_{\Q}^{p,q}\to M_{\Q}^{p-1, q-1}$ is the contraction map $c_{i,j}$ in \eqref{cij} for each $M\in\Ob(\VIC)$.

Let $V^{\langle p,q\rangle}$ denote the \emph{traceless part} of $V^{p,q}$, which is the $\VIC$-module defined by
$$
  V^{\langle p,q\rangle}=\ker\left(\bigoplus_{(i,j)\in [p]\times [q]} c_{i,j} \;:\; V^{p,q}\to \bigoplus_{(i,j)\in [p]\times [q]}V^{p-1,q-1}\right)\subset V^{p,q}.
$$
For each $M\in\Ob(\VIC)$, we have $V^{\langle p,q\rangle}(M)=M_\Q^{\langle p,q\rangle}$.
For $(p,q)=0$, we have $V^{\langle 0,0\rangle}(M)=V^{0,0}(M)=\Q$.

For $0\le l\le \min(p,q)$, let
$$\Lambda_{p,q}(l)=\{((i_1,j_1),\ldots, (i_{l+1},j_{l+1}))\in ([p]\times [q])^{l+1}\mid \substack{1\le i_1<i_2<\cdots<i_{l+1}\le p,\\ j_1,  j_2, \ldots, j_{l+1}: \text{ distinct}}\}.$$
For $I=((i_1,j_1),\ldots, (i_{l+1},j_{l+1}))\in \Lambda_{p,q}(l)$, let 
$$c_{I}: V^{p,q}\to  V^{p-l-1,q-l-1}$$
denote the VIC-module map that is obtained as the composition of contraction maps defined by
\begin{gather*}
    \begin{split}
&(v_1\otimes\cdots\otimes v_{p}) \otimes(f_1\otimes\cdots\otimes f_{q})\\
&\mapsto \left(\prod_{r=1}^{l+1}\langle v_{i_r},f_{j_r}\rangle\right)
(v_1\otimes\cdots\widehat{v_{i_1}}\cdots\widehat{v_{i_{l+1}}}\cdots\otimes v_{p})\otimes (f_1\otimes\cdots\widehat{f_{j_1}}\cdots\widehat{f_{j_{l+1}}}\cdots\otimes f_{q}).
    \end{split}
\end{gather*}

Let $k=\min(p,q)$. Define an increasing filtration $F^*=\{F^l\}_{0\le l\le k}$ 
$$V^{\langle p,q\rangle}=F^0\subset F^1\subset \cdots \subset F^l\subset F^{l+1} \subset \cdots \subset F^k=V^{p,q}$$
of the $\VIC$-module $V^{p,q}$ by
\begin{gather}\label{filtration}
     F^l=\ker \left(\bigoplus_{I\in \Lambda_{p,q}(l)} c_{I}\;:\; V^{p,q}\to \bigoplus_{I\in \Lambda_{p,q}(l)} V^{p-l-1,q-l-1}\right).
\end{gather}
Then we have the following exact sequence in ranks $> p+q-1$
\begin{gather}\label{exactfilt}
    0\to F^{l-1}\to F^{l} \to (V^{\langle p-l,q-l\rangle})^{\oplus \binom{p}{l}\binom{q}{l} l!}\to 0
\end{gather} 
for $1\le l\le k$.
By using \eqref{exactfilt}, we can easily check that for $n\ge p+q$
\begin{gather}\label{dimtraceless}
    \dim(V^{\langle p,q\rangle}(n))= \sum_{i=0}^{\min(p,q)}(-1)^i\binom{p}{i}\binom{q}{i}i!\; n^{p+q-2i},
\end{gather}
which is a monic polynomial in $n$ of degree $p+q$.

\begin{proposition}\label{tracelesspoly}
The $\VIC$-module $V^{\langle p,q\rangle}$ is polynomial of degree $p+q$ in ranks $>2(p+q)$.
\end{proposition}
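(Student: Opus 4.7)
My plan is to induct on $p+q$, exploiting the filtration \eqref{filtration} of $V^{p,q}$ whose successive quotients \eqref{exactfilt} are built from smaller traceless modules. The base cases are those with $\min(p,q)=0$: here the filtration is trivial, $V^{\langle p,q\rangle}=V^{p,q}$, and Lemma \ref{lemmapatzt}(b) gives polynomiality of degree $p+q$ in all ranks.

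For the inductive step, assume $\min(p,q)\ge 1$ and that the proposition holds for every pair $(p',q')$ with $p'+q'<p+q$. For each $1\le l\le k:=\min(p,q)$, the inductive hypothesis gives that $V^{\langle p-l,q-l\rangle}$ is polynomial of degree $p+q-2l$ in ranks $>2(p+q-2l)$. Lemma \ref{degreechange} (with $s=p+q$) then promotes this to polynomiality of degree $\le p+q$ in ranks $>2(p+q)-2l$, and since $2(p+q)-2l<2(p+q)$, the conclusion remains valid in ranks $>2(p+q)$. Because finite direct sums of $\VIC$-modules of the same polynomial degree in the same ranks are again such (apply Lemma \ref{lemmapatzt}(a) to split short exact sequences), each subquotient $F^l/F^{l-1}\cong (V^{\langle p-l,q-l\rangle})^{\oplus\binom{p}{l}\binom{q}{l}l!}$ is polynomial of degree $\le p+q$ in ranks $>2(p+q)$. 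Starting from the top of the filtration, $F^k=V^{p,q}$, which is polynomial of degree $p+q$ everywhere by Lemma \ref{lemmapatzt}(b), I descend via the short exact sequences \eqref{exactfilt} and apply Lemma \ref{lemmapatzt}(a) with $m=2(p+q)$ and $r=p+q$ at each step; the range $>p+q-1$ of validity of \eqref{exactfilt} is exactly $>\max(m-r-1,-1)$, so the lemma applies without slack. Iterating gives polynomiality of degree $\le p+q$ in ranks $>2(p+q)$ for $F^0=V^{\langle p,q\rangle}$.

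To sharpen $\le p+q$ to the precise degree $p+q$, I invoke \eqref{dimtraceless}, which shows that $\dim V^{\langle p,q\rangle}(n)$ is a monic polynomial in $n$ of degree $p+q$ for $n\ge p+q$; since $2(p+q)\ge p+q$, Lemma \ref{deg} then upgrades the bound to polynomiality of degree exactly $p+q$ in ranks $>2(p+q)$. The only delicate point is the bookkeeping of rank thresholds: the degree-change step pushes the threshold from $2(p+q-2l)$ to $2(p+q)-2l$, which stays below $2(p+q)$ precisely because $l\ge 1$, and Lemma \ref{lemmapatzt}(a) with $(m,r)=(2(p+q),p+q)$ demands the short exact sequences \eqref{exactfilt} to hold in ranks $>p+q-1$, which is exactly the range in which they are given. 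These two matches, which are on the nose, are what pin down $2(p+q)$ as the correct constant.
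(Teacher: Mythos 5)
Your proof is correct and follows essentially the same route as the paper's: descending induction along the filtration \eqref{filtration}, combining Lemma \ref{lemmapatzt}(a), Lemma \ref{degreechange}, and then sharpening to exact degree via Lemma \ref{deg} and \eqref{dimtraceless}. The only cosmetic difference is the outer induction (you induct on $p+q$; the paper reduces to $p\ge q$ by symmetry and inducts on $q$), which changes nothing substantive and, if anything, sidesteps the small duality argument needed to justify the paper's ``by symmetry'' reduction.
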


\begin{proof}
By symmetry, we may assume $p\ge q$.
We prove that $V^{\langle p,q\rangle}$ is polynomial of degree $\le p+q$ in ranks $>2(p+q)$ by induction on $q$.
If $q=0$, then for any $i\ge 0$, the $\VIC$-module $V^{\langle i,0\rangle}=V^{\otimes i}$ is polynomial of degree $\le i$ in ranks $>-1$, hence in ranks $> 2i$.
Suppose that $V^{\langle i,j\rangle}$ is polynomial of degree $\le i+j$ in ranks $>2(i+j)$ for any $j\le q-1$ and $i\ge j$.
Then we have a filtration $F^*=\{F^l\}_{0\le l\le q}$ of $V^{\langle p,q\rangle}$ defined in \eqref{filtration}.
Here we use descending induction on $l$.
For $l=q$, we have $F^l=F^q=V^{p,q}$, which is polynomial of degree $\le p+q$.
Suppose that $F^{l}$ is polynomial of degree $\le p+q$ in ranks $>2(p+q)$.
Then from \eqref{exactfilt}, we have an exact sequence
$$
 0\to F^{l-1}\to F^{l}\to (V^{\langle p-l,q-l\rangle})^{\oplus \binom{p}{l}\binom{q}{l} l!} \to 0
$$
in ranks $> p+q-1= 2(p+q)-(p+q)-1$.
By induction hypothesis, $V^{\langle p-l,q-l\rangle}$ is polynomial of degree $\le p+q-2l$ in ranks $> 2(p+q-2l)$. By Lemma \ref{degreechange}, $V^{\langle p-l,q-l\rangle}$ is also polynomial of degree $\le p+q$ in ranks $> 2(p+q)$.
Hence, by Lemma \ref{lemmapatzt}, $(V^{\langle p-l,q-l\rangle})^{\oplus \binom{p}{l}\binom{q}{l} l!}$ is polynomial of degree $\le p+q$ in ranks $>2(p+q)$.
Since $F^{l}$ is polynomial of degree $\le p+q$ in ranks $> 2(p+q)$, by Lemma \ref{lemmapatzt}, so is $F^{l-1}$.
Therefore, by induction, we see that $V^{\langle p,q\rangle}$ is a polynomial $\VIC$-module of degree $\le p+q$ in ranks $>2(p+q)$.
By Lemma \ref{deg} and \eqref{dimtraceless}, the polynomial degree of $V^{\langle p,q\rangle}$ is $p+q$.
\end{proof}

\begin{remark}
(1) The range ``$>2(p+q)$'' is not optimal.
For example, if $p=0$ or $q=0$, then we have $V^{\langle p,q\rangle}=V^{p,q}$, which is polynomial of degree $p+q$ in ranks $>-1$.

(2) Proposition \ref{tracelesspoly} holds also with the definitions of polynomiality given in Patzt \cite{Patzt} and Kupers--Miller--Patzt \cite{Kupers-Miller-Patzt}, since our definition of polynomiality is stronger than theirs.  The range ``$>2(p+q)$'' could be improved if we use their definitions.
If we use Patzt's definition, we see that the range is $> p+q-1$ by the same argument as the proof of Proposition \ref{tracelesspoly}.
\end{remark}

\subsection{Polynomial $\VIC$-module $V_{\ul\lambda}$}

Here we construct a polynomial $\VIC$-module $V_{\ul\lambda}$ for each bipartition $\ul\lambda$.

For a bipartition $\ul\lambda=(\lambda,\lambda')$, let $p=|\lambda|$ and $q=|\lambda'|$.
Define a $\VIC$-module $V_{\ul\lambda}$ as
$$
 V_{\ul\lambda}(M)
 = V^{\langle p,q\rangle}(M) \otimes_{\Q[\gpS_{p}\times \gpS_{q}]} (S^{\lambda}\otimes S^{\lambda'})
$$
for $M\in\Ob(\VIC)$,
where $\Q[\gpS_p \times \gpS_q]$ acts on $V^{\langle p,q\rangle}(M)$ on the right by permutation of tensor factors.
Note that the $\GLnZ$-module $V_\ulla(\Z^n)$ is isomorphic to the $\GLnZ$-module $V_\ulla(n)$ defined in \eqref{defVlambda}.

The direct sum decomposition \eqref{kercontraction} of $\GL(n,\Z)$-modules can be extended to
the following.

\begin{lemma}
\label{Vpq-decomposition}
We have a direct sum decomposition of $V^{\langle p,q\rangle}$ as a $\VIC\times (\gpS_p\times \gpS_q)$-module
\begin{gather}\label{kercontractionVIC}
    V^{\langle p,q\rangle}=\bigoplus_{\substack{\ul\lambda=(\lambda,\lambda') \\|\lambda|=p,\;|\lambda'|=q}} V_{\ul\lambda}\otimes (S^{\lambda}\otimes S^{\lambda'}).
\end{gather}
\end{lemma}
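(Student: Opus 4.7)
The plan is to lift the pointwise decomposition \eqref{kercontraction} to a decomposition of $\VIC$-modules, using naturality. The right-hand side of \eqref{kercontractionVIC} makes sense as a $\VIC$-module, since $V_{\ul\lambda}$ is defined objectwise by the coequalizer formula $V_{\ul\lambda}(M)=V^{\langle p,q\rangle}(M)\otimes_{\Q[\gpS_p\times\gpS_q]}(S^\lambda\otimes S^{\lambda'})$, and this construction is manifestly functorial in $M$. So the task reduces to establishing a $\VIC\times(\gpS_p\times\gpS_q)$-equivariant isomorphism whose value at each $M$ agrees with \eqref{kercontraction}.

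First, I would observe that the right action of $\gpS_p\times\gpS_q$ on $V^{p,q}$ by permutation of tensor factors commutes with every $\VIC$-morphism and preserves each contraction map $c_{i,j}$ (up to reindexing), hence preserves $V^{\langle p,q\rangle}$. Thus $V^{\langle p,q\rangle}$ is a $\VIC$-module valued in right $\Q[\gpS_p\times\gpS_q]$-modules. Second, I would invoke semisimplicity: since $\Q[\gpS_p\times\gpS_q]$ is a semisimple $\Q$-algebra whose simple modules are exactly the absolutely irreducible representations $S^\lambda\otimes S^{\lambda'}$ (Specht modules being absolutely irreducible over $\Q$), Artin--Wedderburn yields, for every right $\Q[\gpS_p\times\gpS_q]$-module $N$, a decomposition
\begin{gather*}
N\;\cong\;\bigoplus_{|\lambda|=p,\,|\lambda'|=q}\bigl(N\otimes_{\Q[\gpS_p\times\gpS_q]}(S^\lambda\otimes S^{\lambda'})\bigr)\otimes_{\Q}(S^\lambda\otimes S^{\lambda'})
\end{gather*}
that is natural in $N$ and equivariant for the right $\gpS_p\times\gpS_q$-action on the second tensor factor. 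Applying this naturally in $M\in\Ob(\VIC)$ to $N=V^{\langle p,q\rangle}(M)$ gives the desired isomorphism of $\VIC\times(\gpS_p\times\gpS_q)$-modules.

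Finally, I would verify consistency with \eqref{kercontraction}: the pointwise decomposition there is indexed by bipartitions with $l(\ul\lambda)\le n$, whereas \eqref{kercontractionVIC} runs over all bipartitions of the prescribed sizes; this discrepancy disappears because $V_{\ul\lambda}(\Z^n)=0$ whenever $l(\ul\lambda)>n$, so the extra summands vanish upon evaluation. There is no real obstacle here: the only nontrivial input is the semisimplicity of $\Q[\gpS_p\times\gpS_q]$ together with the absolute irreducibility of Specht modules over $\Q$, and naturality of the isotypic decomposition of a module over a semisimple algebra is automatic.
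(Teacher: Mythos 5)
Your proof is correct and takes essentially the same approach as the paper: both rest on the semisimplicity of $\Q[\gpS_p\times\gpS_q]$ and the resulting natural isotypic decomposition $N\cong\bigoplus_{\lambda,\lambda'}\bigl(N\otimes_{\Q[\gpS_p\times\gpS_q]}(S^{\lambda}\otimes S^{\lambda'})\bigr)\otimes(S^{\lambda}\otimes S^{\lambda'})$, applied objectwise and functorially in $M\in\Ob(\VIC)$. The paper packages this as a short chain of isomorphisms starting from $V^{\langle p,q\rangle}\cong V^{\langle p,q\rangle}\otimes_{\Q[\gpS_p\times\gpS_q]}(\Q[\gpS_p]\otimes\Q[\gpS_q])$ and then inserting $\Q[\gpS_p]\cong\bigoplus_{|\lambda|=p}S^{\lambda}\otimes S^{\lambda}$, which is precisely the Artin--Wedderburn step you invoke.
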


\begin{proof}
In the category of $\VIC\times(\gpS_p\times\gpS_q)$-modules, we have
\begin{gather*}
    \begin{split}
        V^{\langle p,q\rangle}
        &\cong V^{\langle p,q\rangle}\otimes_{\Q[\gpS_p\times\gpS_q]}(\Q[\gpS_p]\otimes\Q[\gpS_q])\\
        &\cong V^{\langle p,q\rangle}\otimes_{\Q[\gpS_p\times\gpS_q]}\left(\bigoplus_{|\lambda|=p}S^{\lambda}\otimes S^{\lambda}\right)\otimes\left(\bigoplus_{|\lambda'|=q}S^{\lambda'}\otimes S^{\lambda'}\right)\\
        &\cong\bigoplus_{|\lambda|=p,\;|\lambda'|=q}
        \left(V^{\langle p,q\rangle}\otimes_{\Q[\gpS_p\times\gpS_q]}(S^{\lambda}\otimes S^{\lambda'})\right)\otimes(S^{\lambda}\otimes S^{\lambda'})\\
        &\cong\bigoplus_{|\lambda|=p,\;|\lambda'|=q}
        V_{\ul\lambda}\otimes(S^{\lambda}\otimes S^{\lambda'}).
    \end{split}
\end{gather*}
\end{proof}

The following lemma should be well known, but we sketch a proof here since we could not find a suitable reference.
\begin{lemma}
For each bipartition $\ulla$, there is a polynomial $f_\ulla(x)$ of degree $|\ulla|$ such that 
$\dim(V_{\ul\lambda}(n))=f_\ulla(n)$ for $n\ge |\ul\lambda|$.
\end{lemma}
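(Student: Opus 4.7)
The plan is to proceed by induction on $|\ulla|$.

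\emph{Base case:} $|\ulla|=0$ forces $\ulla=(0,0)$, so $V_\ulla(n)=\Q$ and $\dim V_\ulla(n)=1$ for every $n$. The constant polynomial $f_{(0,0)}(x)=1$ works.

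\emph{Inductive step:} Assume the statement for all bipartitions of size strictly less than $|\ulla|$. Write $p=|\lambda|$ and $q=|\lambda'|$. For polynomial $\GL(n,\Q)$-representations, the hook-content formula
\[
\dim V_\lambda(n)=\prod_{(i,j)\in\lambda}\frac{n+j-i}{h(i,j)}
\]
exhibits $\dim V_\lambda(n)$ as a polynomial in $n$ of degree $|\lambda|$ (valid for $n\ge l(\lambda)$), and similarly for $\lambda'$. Hence $\dim V_\lambda(n)\cdot \dim V_{\lambda'}(n)$ agrees with a polynomial of degree $|\ulla|$ in $n$ for $n\ge\max(l(\lambda),l(\lambda'))$.

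Next, I would invoke Koike's stable decomposition \cite{Koike} of a tensor product with a dual: for $n\ge|\ulla|$,
\[
V_\lambda(n)\otimes V_{\lambda'}(n)^*\;\cong\;V_\ulla(n)\;\oplus\;\bigoplus_{\ul\mu:\,|\ul\mu|<|\ulla|}V_{\ul\mu}(n)^{\oplus N_{\ul\mu}},
\]
where the multiplicities $N_{\ul\mu}\in\Z_{\ge0}$ are independent of $n$, and every summand other than $V_\ulla$ comes from applying at least one contraction and therefore satisfies $|\ul\mu|\le|\ulla|-2$. Taking dimensions,
\[
\dim V_\ulla(n)=\dim V_\lambda(n)\cdot\dim V_{\lambda'}(n)-\sum_{\ul\mu}N_{\ul\mu}\dim V_{\ul\mu}(n).
\]
By the induction hypothesis each $\dim V_{\ul\mu}(n)$ is given by a polynomial $f_{\ul\mu}(n)$ of degree $|\ul\mu|<|\ulla|$ for $n\ge|\ul\mu|$, hence for $n\ge|\ulla|$. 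Substituting, $\dim V_\ulla(n)$ agrees with a polynomial $f_\ulla(n)$ in $n$, and its degree is exactly $|\ulla|$ since the leading term is contributed solely by $\dim V_\lambda(n)\cdot\dim V_{\lambda'}(n)$ and cannot be canceled by terms of strictly smaller degree.

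The main obstacle is justifying the stable decomposition, including the $n$-independence of $N_{\ul\mu}$ and the bound $|\ul\mu|\le|\ulla|-2$. Both points follow from Koike's work; a self-contained alternative is to tensor the short exact sequences \eqref{exactfilt} on the filtration $F^*$ of $V^{\langle p,q\rangle}$ with $S^\lambda\otimes S^{\lambda'}$ over $\gpS_p\times\gpS_q$, which expresses $V_\lambda\otimes V_{\lambda'}^*$ as an extension of $V_\ulla$ by pieces built from $V_{\ul\mu}$ with strictly smaller $|\ul\mu|$, all with multiplicities independent of $n$ once $n\ge p+q$.
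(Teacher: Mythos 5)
Your proposal is correct and takes essentially the same approach as the paper: induction on $|\ulla|$ via Koike's decomposition of $V_{\lambda,0}(n)\otimes V_{0,\lambda'}(n)$ into $V_\ulla(n)$ plus pieces indexed by strictly smaller bipartitions with $n$-independent multiplicities. You supply some details the paper leaves implicit (the hook-content formula for the degree claim, the bound $|\ul\mu|\le|\ulla|-2$, and the remark that the filtration $F^*$ gives a self-contained route), but the core argument and the reference to Koike are identical.
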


\begin{proof}
Let $n\ge |\ul\lambda|$.
The dimension of $V_{\lambda,0}(n)\otimes V_{0,\lambda'}(n)$ is polynomial of degree $|\ul\lambda|$.
We can obtain the lemma by using induction, since we have a decomposition
$$
V_{\lambda,0}(n)\otimes V_{0,\lambda'}(n)\cong V_{\lambda,\lambda'}(n)\oplus
\bigoplus_{|\mu|<|\la|,\;|\mu'|<|\la'|}
V_{\mu,\mu'}(n)^{\oplus c_{\mu,\mu'}},
$$
where the constants $c_{\mu,\mu'}$, \emph{not} depending on $n$, are determined by the Littlewood--Richardson coefficients (see \cite{Koike}). 
\end{proof}

\begin{proposition}\label{polynomialityofrational}
Let $\ul\lambda=(\lambda,\lambda')$ be a bipartition.
If either $\lambda=0$ or $\lambda'=0$, then the $\VIC$-module $V_{\ul\lambda}$ is polynomial of degree $|\ul\lambda|$ (in ranks $>-1$).
Otherwise, the $\VIC$-module $V_{\ul\lambda}$ is polynomial of degree $|\ul\lambda|$ in ranks $>2|\ul\lambda|$.
\end{proposition}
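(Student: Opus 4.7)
The plan is to deduce the proposition from Proposition \ref{tracelesspoly}, the decomposition in Lemma \ref{Vpq-decomposition}, and the behavior of polynomiality under direct summands (Lemma \ref{lempolydirectsum}).

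First I would dispose of the degenerate case when $\lambda=0$ or $\lambda'=0$. By symmetry assume $\lambda'=0$, so $q=0$, and then the indexing set of contraction maps $[p]\times [q]$ is empty, giving $V^{\langle p,0\rangle}=V^{p,0}=(V^{1,0})^{\otimes p}$. This VIC-module is polynomial of degree $p$ in ranks $>-1$ by Lemma \ref{lemmapatzt}(b) (each $V^{1,0}$ is polynomial of degree $1$ in ranks $>-1$). Now $V_{\ul\lambda}=V^{p,0}\otimes_{\Q[\gpS_p]}S^{\lambda}$ is a direct summand of $V^{p,0}\otimes_\Q(S^\lambda)^*\otimes_\Q S^\lambda\cong (V^{p,0})^{\oplus \dim S^\lambda\cdot \dim S^\lambda}$ in the VIC-module sense (as $S^\lambda$ is a $\Q[\gpS_p]$-direct summand of $\Q[\gpS_p]$). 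Hence by Lemma \ref{lempolydirectsum}, $V_{\ul\lambda}$ is polynomial of degree $\le |\ul\lambda|$ in ranks $>-1$.

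For the general case with both $\lambda,\lambda'\ne 0$, I would use Lemma \ref{Vpq-decomposition}, which exhibits $V_{\ul\lambda}\otimes (S^\lambda\otimes S^{\lambda'})$ as a direct summand of $V^{\langle p,q\rangle}$ in the category of VIC-modules (forgetting the $\gpS_p\times\gpS_q$-action). By Proposition \ref{tracelesspoly}, $V^{\langle p,q\rangle}$ is polynomial of degree $p+q=|\ul\lambda|$ in ranks $>2|\ul\lambda|$. Since $S^\lambda\otimes S^{\lambda'}$ is a finite-dimensional $\Q$-vector space of some dimension $d=\dim S^\lambda\cdot\dim S^{\lambda'}\ge 1$, we have $V_{\ul\lambda}\otimes (S^\lambda\otimes S^{\lambda'})\cong V_{\ul\lambda}^{\oplus d}$ as VIC-modules. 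Applying Lemma \ref{lempolydirectsum} twice (once to extract this summand from $V^{\langle p,q\rangle}$, once to extract $V_{\ul\lambda}$ from $V_{\ul\lambda}^{\oplus d}$) shows that $V_{\ul\lambda}$ is polynomial of degree $\le |\ul\lambda|$ in ranks $>2|\ul\lambda|$.

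Finally, to upgrade the bound ``$\le|\ul\lambda|$'' to the exact degree ``$|\ul\lambda|$'', I would apply Lemma \ref{deg}, using the polynomial dimension formula for $\dim V_{\ul\lambda}(n)$ established just before the proposition: this polynomial has degree exactly $|\ul\lambda|$ and agrees with $\dim V_{\ul\lambda}(n)$ for $n\ge|\ul\lambda|$, which covers the relevant range in both cases ($n>-1$ combined with $\dim V_{\ul\lambda}(n)=0$ when $n<l(\ul\lambda)$ is enough in the first case; in the second case $n>2|\ul\lambda|\ge|\ul\lambda|$).

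There is really no significant obstacle here; the whole argument is a bookkeeping exercise combining Proposition \ref{tracelesspoly}, Lemma \ref{Vpq-decomposition}, and Lemma \ref{lempolydirectsum}. The only mildly delicate point is to confirm that the decomposition of Lemma \ref{Vpq-decomposition} really is a VIC-module decomposition (with respect to the $\gpS_p\times\gpS_q$-isotypic decomposition), so that Lemma \ref{lempolydirectsum} applies; this is immediate since the VIC-action commutes with the symmetric group action on tensor factors.
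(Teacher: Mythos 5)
Your proof is correct and takes essentially the same route as the paper's: realize $V_{\ul\lambda}$ as a VIC-module direct summand of $V^{\langle p,q\rangle}$ (you argue this a bit more directly in the degenerate case via semisimplicity of $\Q[\gpS_p]$, but it amounts to the same thing as the $q=0$ case of Lemma~\ref{Vpq-decomposition}), and then combine Proposition~\ref{tracelesspoly} with Lemma~\ref{lempolydirectsum}. You are, if anything, slightly more careful than the paper's own proof, which stops at degree $\le|\ul\lambda|$ and leaves the upgrade to exact degree implicit, whereas you explicitly close this gap by invoking Lemma~\ref{deg} together with the dimension polynomial for $V_{\ul\lambda}(n)$.
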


\begin{proof}
Let $p=|\lambda|$ and $q=|\lambda'|$.

Suppose $p=0$ or $q=0$. Then $V^{\langle p,q\rangle}=V^{p,q}$ is polynomial of degree $\le p+q$.
Since $V_\ulla$ is a direct summand of $V^{p,q}$ by Lemma \ref{Vpq-decomposition}, it follows from Lemma \ref{lempolydirectsum} that $V_\ulla$ is polynomial of degree $\le p+q=|\ulla|$.

Suppose $p,q\neq0$. By Proposition \ref{tracelesspoly}, $V^{\langle p,q\rangle}$ is polynomial of degree $\le p+q$ in ranks $>2(p+q)$.
By Lemma \ref{Vpq-decomposition}, $V_\ulla$ is a direct summand of $V^{\langle p,q\rangle}$. Hence, by Lemma \ref{lempolydirectsum}, it follows that $V_{\ul\lambda}$ is a polynomial $\VIC$-modules of degree $\le p+q$
in ranks $>2(p+q)$.
\end{proof}

\subsection{Irreducibility of $V_\ulla$ in the stable category of VIC-modules}

In this subsection, we make a digression and observe that the VIC-module $V_\ulla$ is an irreducible object in the stable category of VIC-modules, which is independently
known to Powell \cite{Powell-private-communication}.

The functor $V_{\ul\la}$ is \emph{not} an irreducible object in the category $\VICmod$ of $\VIC$-modules since any irreducible object $V$ in $\VICmod$ is concentrated at one rank, i.e., there is an integer $r\ge0$ such that we have $V(n)=0$ for all $n\neq r$.  Thus the VIC-module $V_\ulla$ has infinitely many irreducible subquotients.

We here consider the  \emph{stable category of $\VIC$-modules} defined by Djament and Vespa \cite{Djament--Vespa13}, which is defined as follows.
(See also \cite{Sam-Snowden1, Sam-Snowden2} and Remark \ref{SamSnowden} below.)
A $\VIC$-module $F$ is called \emph{stably zero} if
for any element $x\in F(n)$, $n\ge0$, there is $N\ge n$ such that we have $F(i_{n,N})(x)=0$, where $i_{n,N}:\Z^n\hookrightarrow\Z^N$ is the canonical inclusion.
Let $\Sz$ denote the full subcategory of $\VICmod$ whose objects are stably zero VIC-module.
Then $\Sz$ is a Serre subcategory of $\VICmod$.
The stable category of VIC-modules, $\St $, is 
the quotient abelian category $\St =\VICmod/\Sz$.
Let $\pi:\VICmod\to \St$ denote the canonical functor.

\begin{proposition}[independently known to Powell \cite{Powell-private-communication}]
\label{prop-irreducible}
For each bipartition $\ulla$, the object $\pi(V_{\ul\la})$ is irreducible in $\St $.
\end{proposition}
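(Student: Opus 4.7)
The plan is to prove irreducibility by showing that every nonzero subobject of $\pi(V_\ulla)$ in $\St$ equals $\pi(V_\ulla)$ itself. By the general formalism of Serre quotient categories, each subobject of $\pi(V_\ulla)$ in $\St=\VICmod/\Sz$ is represented by a genuine sub-VIC-module $W\subset V_\ulla$, and $\pi(W)$ is nonzero in $\St$ if and only if $W$ is not stably zero. The task thus reduces to the following claim: if $W\subset V_\ulla$ is a sub-VIC-module that is not stably zero, then the quotient $V_\ulla/W$ is stably zero.

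Unwinding the definition of stably zero, there exist $n_0\ge 0$ and $x\in W(n_0)$ with $W(i_{n_0,N})(x)\neq 0$ for every $N\ge n_0$; in particular $W(N)\neq 0$ for all $N\ge n_0$. The key step is then to fix $N\ge \max(n_0,l(\ul\la))$ and observe that $W(N)$ is a nonzero $\Q$-linear subspace of $V_\ulla(N)$ preserved by $\Aut_\VIC(\Z^N)=\GL(N,\Z)$. Because $V_\ulla(N)$ is irreducible as an algebraic $\GL(N,\Q)$-representation (Section \ref{alg-glnz-rep}) and $\SL(N,\Z)$ is Zariski dense in $\SL(N,\C)$, the complexification of any $\SL(N,\Z)$-stable $\Q$-subspace of $V_\ulla(N)$ is preserved by the full Zariski closure $\SL(N,\C)$; since $V_\ulla(N)$ remains irreducible under $\SL(N,\C)$ for $N\ge l(\ul\la)$, this forces $W(N)=V_\ulla(N)$.

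Consequently $(V_\ulla/W)(N)=0$ for every $N\ge\max(n_0,l(\ul\la))$, so $V_\ulla/W$ is stably zero in a particularly strong sense: it vanishes outside a bounded range of ranks. Hence $\pi(W)=\pi(V_\ulla)$ in $\St$, and $\pi(V_\ulla)$ has no proper nonzero subobjects. The only delicate inputs are the identification of subobjects in the Serre quotient with honest subfunctors modulo stably zero equivalence, and the Zariski density argument promoting $\SL(N,\Z)$-stability of a $\Q$-subspace to $\SL(N,\Q)$-stability; both are standard, so I do not foresee any serious obstacle.
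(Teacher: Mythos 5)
Your proposal is correct and takes essentially the same approach as the paper's proof: reduce irreducibility of $\pi(V_\ulla)$ to the claim that any non–stably‑zero sub‑$\VIC$‑module $W\subset V_\ulla$ has stably zero quotient, and deduce $W(N)=V_\ulla(N)$ for large $N$ from the irreducibility of $V_\ulla(N)$ as a $\GL(N,\Z)$‑representation. The only substantive difference is that you spell out, via Zariski density of $\SL(N,\Z)$, why this $\GL(N,\Z)$‑irreducibility holds, whereas the paper simply invokes it (it is implicit in Section~\ref{alg-glnz-rep}).
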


\begin{proof}
Recall that the $\GLnZ$-module $V_\ulla(\Z^n)$ given by the VIC-module structure coincides with the irreducible $\GLnZ$-module $V_\ulla(n)$ defined in \eqref{defVlambda}.
If $V$ is any VIC-submodule of $V_{\ulla}$ such that $V\neq0$, then there is an integer $N\ge l(\ulla)$ such that we have
\begin{gather*}
V(\Z^n) = \begin{cases}
0&(0\le n< N)\\
V_\ulla(\Z^n) &(N\le n).
\end{cases}
\end{gather*}
Since the quotient VIC-module $V_\ulla/V$ is stably zero, we have an isomorphism $V\cong V_\ulla$ in the stable category $\St $.  Hence $\pi(V_\ulla)$ is irreducible in $\St $.
\end{proof}

\nc\Rep{\mathrm{Rep}}
\begin{remark}
\label{SamSnowden}
Proposition \ref{prop-irreducible} could also be proved by adapting Sam and Snowden's results on $\VIC(\C)$-modules \cite{Sam-Snowden1,Sam-Snowden2}.
In \cite{Sam-Snowden1}, they proved that simple objects of the category $\Rep(\GL)$ of algebraic $\GL_{\infty}(\C)$-modules are classified with bipartitions, and in \cite{Sam-Snowden2} they proved that $\Rep(\GL)$ is equivalent to the stable category of algebraic $\VIC(\C)$-modules.
These results seem to imply that the $\VIC(\C)$-variant of the $\VIC$-module $V_\ulla$ is a simple object in the stable category of $\VIC(\C)$-modules.
\end{remark}

\subsection{Improvement of Borel's vanishing theorem}
In this subsection, we improve Borel's vanishing range for coefficients in the $\VIC$-modules $V_{\ul\lambda}$ by adapting the proof of the following result of Kupers, Miller and Patzt \cite{Kupers-Miller-Patzt}.

\begin{theorem}[Kupers--Miller--Patzt {\cite[Theorem 7.6]{Kupers-Miller-Patzt}}]\label{KMPprop}
Let $\lambda\neq 0$ be a partition.
  Then we have
  $$H_p(\GL(n,\Z),V_{\lambda,0})=0$$
  for $p<n-|\lambda|$.
For the trivial coefficient, we have an isomorphism
\begin{gather*}
    H_p(\GL(n,\Z),\Q)\cong H_p(\GL(n+1,\Z),\Q)
\end{gather*}
for $p<n$.
\end{theorem}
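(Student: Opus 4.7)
My plan is to prove Theorem \ref{KMPprop} via a central stability argument, using the polynomial $\VIC$-module structure of $V_{\lambda,0}$ established in Proposition \ref{polynomialityofrational}, combined with the high connectivity of a standard semisimplicial complex associated to $\Z^n$.

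First, for the trivial-coefficient statement, I would consider the semisimplicial set $X_\bullet(\Z^n)$ whose $p$-simplices are ordered $(p+1)$-tuples of vectors in $\Z^n$ extending to a basis. By the work of Charney (or van der Kallen), $|X_\bullet(\Z^n)|$ is at least $(n-2)$-connected, and $\GL(n,\Z)$ acts on it with $p$-simplex stabilizers essentially $\GL(n-p-1,\Z)$. The associated equivariant spectral sequence
\begin{gather*}
E^1_{p,q} \cong H_q(\GL(n-p-1,\Z),\Q) \;\Longrightarrow\; H_{p+q}(\GL(n,\Z),\Q),
\end{gather*}
together with Borel's stability theorem in $* \le n-2$, would then give the one-degree improvement $H_p(\GL(n,\Z),\Q) \cong H_p(\GL(n+1,\Z),\Q)$ for $p < n$ via a straightforward induction on $n$, comparing the spectral sequences for $\GL(n,\Z)$ and $\GL(n+1,\Z)$.

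For the general case $\lambda \neq 0$, I would induct on $|\lambda|$. Since $V_{\lambda,0}$ is polynomial of degree $|\lambda|$ in ranks $> -1$ by Proposition \ref{polynomialityofrational}, we get short exact sequences under stabilization whose kernels and cokernels have strictly smaller polynomial degree. Upon restricting $V_{\lambda,0}$ to the stabilizer $\GL(n-p-1,\Z)$ of a partial basis, a branching rule expresses the restriction as a direct sum of terms isomorphic to $V_{\mu,0}(\Z^{n-p-1})$ tensored with auxiliary pieces, with $|\mu| \le |\lambda|$. Feeding this into the analogous equivariant spectral sequence and applying the inductive hypothesis to the pieces with strictly smaller polynomial degree, while using Borel's vanishing (Theorem \ref{Borelbipartition}) for the leading terms with $|\mu|=|\lambda|$, should yield the vanishing $H_p(\GL(n,\Z),V_{\lambda,0})=0$ for $p < n-|\lambda|$.

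The hardest part will be the precise bookkeeping of how polynomial degree decreases under restriction to stabilizers and how this interacts with the connectivity $(n-2)$ of $|X_\bullet(\Z^n)|$ to yield the sharp bound $p < n-|\lambda|$. This is exactly the point at which the strengthened definition of polynomial $\VIC$-module used in this paper (requiring $\ker V$ to vanish in ranks $> \max(m-r-1,-1)$, rather than only $> m$) should play a crucial role, because it guarantees that the stabilization kernel of $V_{\lambda,0}$ vanishes as soon as $n > |\lambda|-1$, exactly the range needed for the inductive step to close. Matching this window against the spectral sequence edge, and verifying that the branching-rule contributions with $|\mu|=|\lambda|$ fall within Borel's vanishing range, will require care but should be routine modulo the setup above.
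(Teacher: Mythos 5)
The paper does not actually prove Theorem~\ref{KMPprop}: it is quoted verbatim as \cite[Theorem 7.6]{Kupers-Miller-Patzt}, and the surrounding text explains that its proof in the source goes by first establishing the relative-homology stability result reproduced here as Theorem~\ref{KMPlemma} (their Theorem~7.3), and then feeding Borel's vanishing theorem into the resulting stability range --- exactly as the present paper does for the bipartition generalization in the proof of Theorem~\ref{KMPtheorem}. Your proposal instead tries to reprove the result from scratch, essentially reconstructing the content of Theorem~\ref{KMPlemma} (the equivariant spectral sequence over the semisimplicial complex of partial bases, with polynomiality controlling the off-diagonal terms) inside the proof of Theorem~\ref{KMPprop}. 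That route is the one taken in the Kupers--Miller--Patzt source, not in this paper, and it is substantially more work than what the paper asks you to supply; the cleaner logical structure, which the paper makes explicit, is ``stability (Theorem~\ref{KMPlemma}) $+$ stable vanishing (Borel) $\Rightarrow$ unstable vanishing.''

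There is also a concrete error in your final paragraph. For $V_{\lambda,0}$ with a single partition $\lambda$, Proposition~\ref{polynomialityofrational} gives polynomiality of degree $|\lambda|$ in ranks $>-1$, i.e.\ $m=-1$. The paper's remark immediately after the definition notes that when $m=-1$ the three competing definitions of polynomial $\VIC$-module (Patzt's, Kupers--Miller--Patzt's, and the strengthened one used here) all coincide. So the strengthened requirement that $\ker V$ vanish in ranks $>\max(m-r-1,-1)$ buys nothing for $V_{\lambda,0}$: under every definition the stabilization kernel already vanishes in all ranks $\ge 0$, not merely once $n>|\lambda|-1$ as you write. The strengthened definition only matters for the traceless mixed tensors $V^{\langle p,q\rangle}$ with both $p,q>0$, where one genuinely has $m=2(p+q)>-1$; it is irrelevant to the statement you are trying to prove. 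Your inductive bookkeeping should therefore not lean on that definition, and the ``crucial role'' you assign to it would not materialize.
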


In order to prove Theorem \ref{KMPprop}, Kupers, Miller and Patzt used the homology
$H_p(\GL(n,\Z),\GL(n-1,\Z); V(\Z^n),V(\Z^{n-1}))$
of the pair $(\GL(n-1,\Z), \GL(n,\Z))$ defined as follows.
(See \cite{Dwyer} for details of the construction.)
Let $R_i$ be a projective resolution of $\Z$ over $\Z[\GL(i,\Z)]$ for $i=n-1,n$. 
Then there is a chain map
$$\Phi:R_{n-1}\otimes_{\GL(n-1,\Z)} V(\Z^{n-1})\to R_{n}\otimes_{\GL(n,\Z)}V(\Z^{n})$$
induced by the chain map $R_{n-1}\to R_{n}$ induced by $\id_{\Z}$, which is unique up to chain homotopy.
The homology
$H_p(\GL(n,\Z),\GL(n-1,\Z); V(\Z^n),V(\Z^{n-1}))$
is defined as the homology of the mapping cone $C_*(\Phi)$ of the chain map $\Phi$. 
We have the following.

\begin{theorem}[Kupers--Miller--Patzt {\cite[Theorem 7.3]{Kupers-Miller-Patzt}}]\label{KMPlemma}
Let $V$ be a polynomial $\VIC$-module of degree $r$ in ranks $>m$.
Then we have
$$H_p(\GL(n,\Z),\GL(n-1,\Z); V(\Z^n),V(\Z^{n-1}))=0$$
for $p<n-\max(r,m)$.
Consequently, the inclusion $\GL(n-1,\Z)\to\GL(n,\Z)$ induces an isomorphism
\begin{gather*}
    H_p(\GL(n-1,\Z),V(\Z^{n-1}))\overset{\cong}{\longrightarrow} H_p(\GL(n,\Z),V(\Z^{n}))
\end{gather*}
for $p<n-1-\max(r,m)$.
\end{theorem}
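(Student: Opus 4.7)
The plan is to induct on the polynomial degree $r$ of $V$, using the shift functor on $\VIC$-modules to propagate the induction.

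For the base case $r=-1$, I would argue directly: $V(\Z^k)=0$ for every $k>m$, so for $n\ge m+2$ both $V(\Z^n)$ and $V(\Z^{n-1})$ vanish and the mapping cone $C_*(\Phi)$ is the zero complex. The single borderline rank $n=m+1$ collapses the cone to a shift of a complex computing $H_*(\GL(m,\Z),V(\Z^m))$, whose $H_0$ vanishes after the shift, matching the desired bound $p<n-m=1$.

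For the inductive step I would introduce the shift functor $\Sigma$ on $\VICmod$ defined by $(\Sigma V)(M)=V(M\oplus \Z)$, together with the canonical natural transformation $\iota_V\colon V\to\Sigma V$ induced by the inclusions $M\hookrightarrow M\oplus\Z$. By definition its kernel and cokernel are $\ker V$ and $\coker V$, giving the four-term exact sequence
$$0\to\ker V\to V\xrightarrow{\iota_V}\Sigma V\to\coker V\to 0,$$
which I would split into two short exact sequences and feed into the long exact sequence of relative homology of the pair $(\GL(n,\Z),\GL(n-1,\Z))$. By the definition of polynomiality, $\ker V$ has degree $-1$ in ranks $>\max(m-r-1,-1)$, so the base case already gives vanishing for it in a range wider than needed; similarly $\coker V$ has degree $\le r-1$ in ranks $>\max(m-1,-1)$, so by induction its relative homology vanishes for $p<n-\max(r-1,m-1)=n-\max(r,m)+1$, one better than needed. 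Consequently the long exact sequences reduce the problem to controlling the relative homology of $\Sigma V$.

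This last step is where I expect the main obstacle to lie. Since $(\Sigma V)(\Z^k)=V(\Z^{k+1})$, one naively expects a comparison with $H_p(\GL(n+1,\Z),\GL(n,\Z);V(\Z^{n+1}),V(\Z^n))$, but the groups and actions do not match directly: $\GL(n,\Z)$ acts on $V(\Z^{n+1})$ only through a subgroup of $\GL(n+1,\Z)$. The standard workaround is to produce a spectral sequence from the action of $\GL(n+1,\Z)$ on a highly connected semisimplicial complex — for instance the complex of partial bases of $\Z^{n+1}$ studied by Charney, Maazen and van der Kallen — whose connectivity range confines the nontrivial $E^1$-terms to coefficient systems of strictly lower polynomial degree, where the inductive hypothesis applies. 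Setting up this spectral sequence and tracking its connectivity against the bound $p<n-\max(r,m)$ is the heart of the proof; once this is in place, combining the resulting vanishing with the two long exact sequences above closes the induction. The consequent isomorphism for absolute homology groups is then formal, via the long exact sequence of the pair $(\GL(n-1,\Z),\GL(n,\Z))$.
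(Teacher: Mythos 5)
The paper does not prove this statement; it is cited as Theorem~7.3 of Kupers--Miller--Patzt \cite{Kupers-Miller-Patzt}, and the paper remarks that its definition of polynomiality is strictly stronger than theirs, so the cited theorem applies a fortiori. There is therefore no internal proof to compare against.

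Assessed on its own terms, your proposal is an honest sketch with the right ingredients, but it has a genuine gap exactly at the crux. The degree $-1$ base case is correct, and the bookkeeping for the vanishing ranges of $\ker V$ and $\coker V$ via the four-term sequence is accurate. However, the stated reduction ``to controlling the relative homology of $\Sigma V$'' cannot by itself close the induction: the shift functor $\Sigma$ commutes with $\ker$ and $\coker$, so $\Sigma V$ generically has the \emph{same} polynomial degree as $V$, and invoking the sought-for theorem for $\Sigma V$ would be circular. In the arguments of van der Kallen, Dwyer, Randal-Williams--Wahl and Kupers--Miller--Patzt, the degree drop does not occur at the level of $\Sigma V$ but on the $E^1$-page of the spectral sequence associated to the action of $\GL(n,\Z)$ on a highly connected complex of (split) partial bases: there the restricted coefficients $(\Sigma^{q+1}V)(\Z^{n-q-1})$ acquire a finite filtration whose graded pieces have strictly lower polynomial degree, and the inductive hypothesis is applied to those pieces in tandem with the connectivity bound that confines the surviving terms to the claimed range. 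You explicitly acknowledge that ``setting up this spectral sequence and tracking its connectivity \ldots is the heart of the proof'' but do not carry it out, and it is precisely there that the improved slope-one bound $p<n-\max(r,m)$ is won; without it the sketch is not a proof. The final passage from the relative vanishing to the absolute stability isomorphism is, as you say, formal via the long exact sequence of the pair.
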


Here we adapt Theorem \ref{KMPlemma} and generalize Theorem \ref{KMPprop} to the $\VIC$-module $V_{\ul\lambda}$.
For a bipartition $\ul\lambda=(\la,\la')$ and a non-negative integer $p$, set 
$$
\nKMP(\ul\lambda,p)=
\begin{cases}
p+1+|\ul\lambda| & (\text{if } \la=0\text{ or }\la'=0)\\
p+1+2|\ul\lambda| & (\text{otherwise}).
\end{cases}
$$

\begin{theorem}[weaker than Theorem \ref{Borel-Li-Sun}]\label{KMPtheorem}
  Let $\ul\lambda\neq (0,0)$ be a bipartition.
  Then we have
  $$H^p(\GL(n,\Z),V_{\ul\lambda})=0$$
  for $n\ge \nKMP(\ul\lambda,p)$.
\end{theorem}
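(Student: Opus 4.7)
The plan is to imitate Kupers, Miller and Patzt's proof of Theorem \ref{KMPprop}, feeding in Proposition \ref{polynomialityofrational} as the polynomial $\VIC$-module input and Corollary \ref{corollaryhomologyofGL} as the stable vanishing input. Since $V_{\ul\lambda}(\Z^n)$ is a finite-dimensional $\Q$-vector space, duality gives $H^p(\GL(n,\Z),V_{\ul\lambda})\cong H_p(\GL(n,\Z),V_{\ul\lambda^*})^*$, and both the constant $\nKMP(\ul\lambda,p)$ and the polynomial data of $V_{\ul\lambda}$ are invariant under $\ul\lambda\mapsto\ul\lambda^*$ (since $|\ul\lambda^*|=|\ul\lambda|$ and the condition ``$\lambda=0$ or $\lambda'=0$'' is symmetric). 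Hence it suffices to establish the homological vanishing $H_p(\GL(n,\Z),V_{\ul\lambda})=0$ for $n\ge\nKMP(\ul\lambda,p)$.

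By Proposition \ref{polynomialityofrational}, the $\VIC$-module $V_{\ul\lambda}$ is polynomial of degree $r=|\ul\lambda|$ in ranks $>m$, where $m=-1$ if $\lambda=0$ or $\lambda'=0$, and $m=2|\ul\lambda|$ otherwise. Setting $s=\max(r,m)$, a direct check shows $\nKMP(\ul\lambda,p)=p+s+1$ in both cases. Applying Theorem \ref{KMPlemma} to the pair $(\GL(n+1,\Z),\GL(n,\Z))$ yields that $H_k(\GL(n+1,\Z),\GL(n,\Z);V_{\ul\lambda}(\Z^{n+1}),V_{\ul\lambda}(\Z^n))=0$ for $k<n+1-s$. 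In particular, for $n\ge p+s+1=\nKMP(\ul\lambda,p)$ both the degree-$p$ and degree-$(p+1)$ relative homology groups vanish, so the long exact sequence of the pair forces the stabilization map $H_p(\GL(n,\Z),V_{\ul\lambda})\hookrightarrow H_p(\GL(n+1,\Z),V_{\ul\lambda})$ to be injective.

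Iterating this injection, we obtain $H_p(\GL(n,\Z),V_{\ul\lambda})\hookrightarrow H_p(\GL(N,\Z),V_{\ul\lambda})$ for every $N\ge n\ge\nKMP(\ul\lambda,p)$. Choosing $N\ge\max(n,\nB(\ul\lambda,p))$, Corollary \ref{corollaryhomologyofGL} combined with duality gives that the target vanishes, whence $H_p(\GL(n,\Z),V_{\ul\lambda})=0$, as required. The main bookkeeping subtlety — and the place where the shape of $\nKMP$ is pinned down — is the tight match between the two ``$+1$''s: Theorem \ref{KMPlemma} provides relative homology vanishing in the strict range $k<n+1-s$, so injectivity of the stabilization in degree $p$ demands $p+1<n+1-s$, i.e.\ $n\ge p+s+1$, which exactly reproduces $\nKMP(\ul\lambda,p)$. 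Note that the strengthened notion of polynomiality adopted in this paper is precisely what is needed for Theorem \ref{KMPlemma} to give this injectivity with the rank range produced by Proposition \ref{polynomialityofrational}.
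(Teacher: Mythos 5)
Your proof is correct and takes essentially the same route as the paper: Proposition \ref{polynomialityofrational} supplies the polynomial data, Theorem \ref{KMPlemma} gives the stabilization isomorphisms in the range $n \ge p+1+\max(r,m) = \nKMP(\ul\lambda,p)$, and Corollary \ref{corollaryhomologyofGL} provides the vanishing at large enough rank to propagate back. You additionally spell out the homology--cohomology duality $H^p(\GL(n,\Z),V_{\ul\lambda})\cong H_p(\GL(n,\Z),V_{\ul\lambda^*})^*$ and the invariance of both $\nKMP$ and the polynomial data of $V_{\ul\lambda}$ under $\ul\lambda\mapsto\ul\lambda^*$, which the paper's very terse proof leaves tacit; this is a useful clarification, as the paper passes between $H^p$ and a stabilization statement about $H_{p-1}$ without comment.
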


\begin{proof}
By Proposition \ref{polynomialityofrational} and Theorem \ref{KMPlemma}, we have 
$$
 H_{p-1}(\GL(n-1,\Z),V_{\ul\lambda}(\Z^{n-1}))\cong H_{p-1}(\GL(n,\Z),V_{\ul\lambda}(\Z^{n}))
$$
for $p< n- |\ul\lambda|$ if $\la=0$ or $\la'=0$, and for $p< n- 2|\ul\lambda|$ otherwise.
By Corollary \ref{corollaryhomologyofGL}, we have $H^p(\GL(n,\Z),V_{\ul\lambda})=0$ for $n\ge p+1+|\ul\lambda|$ if $\la=0$ or $\la'=0$, and for $n\ge p+1+2|\ul\lambda|$ otherwise.
\end{proof}

% For a bipartition $\ul\lambda$ and a non-negative integer $p$, set 
% $$n_0(\ul\lambda,p)=\min\{\nKMP(\ul\lambda,p), \nB(\ul\lambda,p)\}.$$
% We obtain the improved stable range by Corollary \ref{corollaryhomologyofGL} and Theorem \ref{KMPtheorem}.

% \begin{corollary}\label{improvedrange}[weaker than Theorem \ref{Borel-Li-Sun}]
% We have Theorem \ref{KMPtheorem} with $\nKMP(\ul\lambda,p)$ replaced by $n_0(\ul\lambda,p)$.
% \end{corollary}

\end{document}